\documentclass[11pt,reqno]{amsart}

\usepackage[american]{babel}
\usepackage{comment}
\usepackage{amsmath,mathrsfs}
\usepackage{dsfont,mathtools,amssymb}
\usepackage{color}
\mathtoolsset{showonlyrefs,showmanualtags}
\usepackage[colorlinks=true, linkcolor=black, citecolor=black, urlcolor=black, hidelinks]{hyperref}

\newtheorem{theorem}{Theorem}[section]
\newtheorem{proposition}[theorem]{Proposition}
\newtheorem{lemma}[theorem]{Lemma}

\newtheorem{remark}[theorem]{Remark}
\newtheorem{definition}[theorem]{Definition}

\numberwithin{equation}{section}
\numberwithin{figure}{section}

\newcommand{\T}{\mathbb{T}}
\newcommand{\Ex}{\mathds{E}}
\newcommand{\Prob}{\mathds{P}}
\renewcommand{\P}{\mathbf{P}}
\newcommand{\E}{\mathbf{E}}

\newcommand{\eps}{\varepsilon}

\newcommand{\R}{\mathbb{R}}
\newcommand{\N}{\mathbb{N}}

\newcommand{\be}{\begin{align}}

	\newcommand{\cG}{\ensuremath{\mathcal G}}

	\newcommand{\cW}{\ensuremath{\mathcal W}}

	\def\({\left(}
	\def\){\right)}

	\newcommand{\rw}{{\textsc{rw}}}

	\allowdisplaybreaks
	
	\title[$L^2$-cutoff for avg on random regular graphs]{$L^2$-cutoff for the averaging process \\
		on random regular graphs}
	\subjclass[2020]{Primary 60K35; secondary 82B20; 82C26.}
	
	\keywords{Mixing of Markov chains; cutoff phenomenon; averaging process; random graphs} 
	
	\author{Pietro Caputo, Matteo Quattropani,  and Federico Sau}
	\address{Pietro Caputo\\ Universit\`{a} degli Studi Roma Tre}
	\email{pietro.caputo@uniroma3.it}
	\address{Matteo Quattropani\\  Universit\`{a} degli Studi Roma Tre}
	\email{matteo.quattropani@uniroma3.it}
	\address{Federico Sau\\ Università degli Studi di Milano}
	\email{federico.sau@unimi.it}

\begin{document}
		\maketitle
		\begin{center}
			\emph{Dedicated to Claudio Landim on the occasion of his 60th birthday.}
		\end{center}
		\begin{abstract}
We study the mixing time of the averaging process on  a large random $d$-regular graph, $d\ge 3$, and prove an $L^2$-cutoff with an explicit cutoff time. Somewhat surprisingly, we uncover a phase transition at the finite, fixed degree $d=10$: for small degrees, i.e., $d\le 10$, the averaging process mixes as fast as the corresponding random walk on the same graph, whereas for $d> 10$ its $L^2$-mixing is governed by a different, slower mechanism.
Our proof relies on a detailed asymptotic analysis of an auxiliary biased birth-and-death chain with a slow bond. We also briefly discuss an analogous phase transition for the $L^1$-mixing.
		\end{abstract}
%\tableofcontents		
\section{Introduction, model, and main result}
	
	The quantitative study of convergence to equilibrium for stochastic processes is a classical theme in probability theory.
	A central object in this direction is the {mixing profile} of a Markov chain and, in particular, the possible
	presence of the \emph{cutoff phenomenon}: an abrupt transition from non-equilibrium to equilibrium over a time window
	negligible compared to the mixing time.
	Since its identification in the early 1980s through the pioneering work of Aldous and Diaconis \cite{aldous1986shuffling}, explaining the occurrence of cutoff has become a
	major problem in the modern theory of mixing times.
	Despite a rapidly growing collection of examples and techniques, a general mechanism explaining when cutoff should
	(or should not) occur is still missing; see, e.g., \cite{levin2017markov,salez_cutoff_2024,salez2025modern} for more background.
	
	Among the first and most influential  examples exhibiting cutoff  are \emph{random walks on sparse random graphs}. Here, the interplay
	between geometric expansion and randomness gives rise to sharp mixing transitions (see, e.g., \cite{lubetzky2010cutoff,lubetzky_peres_ramanujan_2016,ben-hamou_salez_2017,berestycki_random_2018,bordenave_random_2018,bordenave_cutoff2019} and references
	therein).
	These models provide a natural testing ground for cutoff questions, as they often allow one to relate mixing to
	spectral information, local tree-like structure, and concentration phenomena.
	
Comparatively, mixing-time results for continuous-state Markov processes are more sparse, and sharp cutoff
statements are known only for selected model classes; see, e.g.,
\cite{smith_analysis_2013,smith_gibbs_2014,barrera_thermalisation2020,caputo_mixing_2019,caputo_labbe_lacoin_spectral_2022,salez2025cutoff,caputo2025universal}.
 An example in this class is the \emph{averaging process}, which is particularly simple to describe, yet it exhibits a rather interesting behavior. Since the seminal work of \cite{aldous_lecture_2012}, the literature on the averaging process has grown rapidly; see, e.g., \cite{chatterjee2020phase,quattropani2021mixing,caputo_quattropani_sau_cutoff_2023,movassagh_repeated2022,sau_concentration_2024,sau_tiny_2024,eide_concentration_2024,gantert_vilkas_averaging_2025,elboim_peres_peretz_edge_2025,campos_hydrodynamic_2025,caputo2024repeated,caputo2025universal}.

  Let us now describe the process.
	Given a graph $G=(V,E)$, fix an initial nonnegative mass
	configuration $\eta=(\eta(x))_{x\in V}$ with $\sum_{x\in V}\eta(x)=1$, and equip each edge of the graph with an independent rate-$1$ Poisson clock.
	Whenever the clock on an edge $xy\in E$ rings at time $t>0$, the masses at $x$ and $y$ are replaced by their average: if $\eta_s$ denotes the mass configuration at time $s\ge 0$, and $\eta_s(z)\in [0,1]$ its mass at $z\in V$, 
	\begin{equation}
	\eta_t(x)=\eta_t(y)=\frac{\eta_{t^-}(x)+\eta_{t^-}(y)}{2}\ ,\qquad \eta_t(z)=\eta_{t^-}(z)\ \text{for all}\ z\neq x, y\ .
	\end{equation}
	
	If $G$ is connected, then, for every initial configuration, the process converges, as $t\to \infty$,	 to the flat one
	$\pi\equiv(1/n,\dots,1/n)$.
	It is worth emphasizing, however, the notion of “distance to equilibrium” under which this convergence is measured.
	Since the steady state is a Dirac delta at $\pi$ and $\eta_t$ is, generally, genuinely random at all times $t>0$, convergence in total variation is too strong.
	For this reason, the literature  has focused instead on Wasserstein-type distances, and, in particular, on
	\begin{equation}\label{eq:wasserstein}
		 \E_{\eta}\!\left[\left\|\frac{\eta_t}{\pi}-1\right\|_{p}^{\,p}\right],
		\qquad p\ge 1\ ,\ t\ge 0\ .
	\end{equation}
	Here, $\|\cdot\|_{p}$ denotes the usual $L^p(\pi)$-norms of functions on $V$, while $\mathbf P_\eta$ and $\mathbf E_\eta$ denote, respectively,  the law and expectation associated to $(\eta_t)_{t\ge 0}$ when $\eta_0=\eta$. It turns out that the quantities in \eqref{eq:wasserstein} are non-increasing in $t$ and are maximized by initial conditions
	that concentrate all the mass at a single vertex. For further details and properties, we refer the interested reader to \cite[Section 2]{caputo_quattropani_sau_cutoff_2023}.

When analyzing the averaging process' mixing, it is natural to compare $(\eta_t)_{t\ge 0}$ with the \textquotedblleft lazy\textquotedblright\ random walk $(X_t)_{t\ge0}$ on the same graph $G$ (hereafter, also referred to as ${\rm RW}(G)$), that is, the process on $V$ which jumps from $x$ to $y$ at rate $1/2$, provided that $xy\in E$. 	
	Indeed, letting $(\pi^\eta_t)_{t\ge0}$ denote the transition probabilities of $(X_t)_{t\ge 0}$ started with distribution $\eta$, one has (see, e.g., \cite{quattropani2021mixing})
	\begin{equation}\label{eq:AVG-RW}
		\pi^\eta_t(x)=\E_{\eta}[\eta_t(x)]\ ,\qquad x\in V\, ,\ t\ge 0\ .
	\end{equation}
As a consequence of this identity and Jensen inequality,  one obtains the following general comparison between the mixing behavior of the averaging process and the random walk on the same graph:
	\begin{equation}\label{eq:comparison}
	 \E_{\eta}\!\left[\left\|\frac{\eta_t}{\pi}-1\right\|_{p}^{p}\right]\ge
		\left\|\frac{\pi_t^\eta}{\pi}-1 \right\|_{p}^{\,p}\,,\qquad p\ge 1\ ,\ t\ge 0\ .
	\end{equation}
	This inequality may be interpreted as saying that, for a fixed initial condition and on the same graph, $(\eta_t)_{t\ge 0}$ approaches equilibrium
	no faster than $(X_t)_{t\ge 0}$ on the same graph.

	The aim of this work is to investigate the mixing behavior of the averaging process on a (large) random $d$-regular graph
	(with fixed degree $d\ge3$) in the $L^2$-metric, when starting with all the mass in one  vertex.	
	Our main result highlights two main facts.
	First, for every $d\ge3$, with high probability over the choice of the graph and of the starting vertex,  the
	averaging process on random regular graphs exhibits an \emph{$L^2$-cutoff}.
		Second, and maybe more unexpectedly, we identify a \textit{finite} degree threshold in the comparison with the random walk: in the large-graph limit and for $p=2$, the inequality in
	\eqref{eq:comparison} is, asymptotically, in fact an equality if $d\le 10$, while it becomes strict as soon as
	$d>10$.
	In particular, beyond degree $d=10$ the averaging process exhibits an intrinsic slowdown relative to the random walk.
	
		\subsection{Main result}		
		For all integers $n\ge 2$ and $d\ge 3$, let $\cG(n,d)$
		be the set of all labeled simple undirected graphs with vertex set $V$, $|V|=n$,
		and all degrees equal to $d$. Let $G$ be a random simple $d$-regular graph of
		size $n$, i.e., an element of $\cG(n,d)$ sampled uniformly at random.
		Equivalently, $G$ can be generated via the configuration model conditioned on
		the event that the resulting multigraph is simple. This graph $G=(V,E)$
		constitutes the underlying random environment, with $\Prob$ and
		$\Ex$ denoting the corresponding law and expectation.
		
		For a given (quenched) realization of $G$, we let the averaging process run on it, and set $\eta_0=\delta_1$, that is,  all the mass is initially concentrated at a generic vertex, say,  with label $1$. 
		This is our main result, whose proof is postponed to Section \ref{sec:proof-main} below.
		\begin{theorem}[$L^2$-cutoff]\label{th:L2}
			Let, for every integer $d\ge 3$,
			\begin{equation}\label{eq:gamma2}
				\gamma=\gamma_d:= \begin{dcases}
					\varrho &\text{if}\ d\le 10\\
					\sigma &\text{if}\ d \ge 10\ ,
				\end{dcases}
			\end{equation}
			where
			\begin{equation}\label{eq:gamma}
			\varrho=\varrho_d:=\frac{2\sqrt{d-1}}{d}\ ,\qquad \sigma=\sigma_d:= \frac{\left(3\sqrt{d-1}-\sqrt{d-9}\right)\sqrt{d-1}}{4d}\ .
			\end{equation}
	Further, define (note that $\gamma \in (0,1)$, for all $d\ge 3$; see \eqref{eq:gamma-properties} below)
			\begin{equation}\label{eq:def-Ta}
				T(a):= \frac{1}{d\left(1-\gamma\right)}\log n+ a\log \log n\ ,\qquad a \in \R\ . 
			\end{equation}
		Then, 
		for every $\varepsilon> 0$, we have
			\begin{align}\label{eq:theorem-lb}	
				\lim_{a\to -\infty}\liminf_{n\to \infty} \Prob\bigg(\E_{\delta_1}\!
				\left[\left\|\frac{\eta_{T(a)}}{\pi}-1\right\|_2^2\right]>\eps\bigg)&=1\ ,
		\\ 
		\label{eq:theorem-ub}
				\lim_{a\to +\infty}\limsup_{n\to \infty} \Prob\bigg(\E_{\delta_1}\!\left[\left\|\frac{\eta_{T(a)}}{\pi}-1\right\|_2^2\right]<\eps\bigg)&=1\ .
		\end{align}

		\end{theorem}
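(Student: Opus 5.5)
The starting point is the standard duality for the second moment of the averaging process. Expanding the square and using $\sum_x\eta_t(x)=1$ gives
\[
\E_{\delta_1}\Big[\big\|\tfrac{\eta_t}{\pi}-1\big\|_2^2\Big]=n\sum_{x\in V}\E_{\delta_1}[\eta_t(x)^2]-1 .
\]
The two-point correlations $\E_{\delta_1}[\eta_t(x)\eta_t(y)]$ evolve under a closed linear system. Concretely, this system is the generator of a pair of ``sticky'' random walks $(X_t,Y_t)$ on $G$: each walk moves at rate $1/2$ along incident edges, and when the two walkers sit at the endpoints of a ringing edge, or share a vertex, they update jointly. Hence
\[
n\sum_x\E_{\delta_1}[\eta_t(x)^2]=n\,\mathbf P_{(1,1)}(X_t=Y_t)
\]
for this coupled pair started at $(1,1)$. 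The problem therefore reduces to showing that $n\,\mathbf P_{(1,1)}(X_t=Y_t)-1$ exhibits cutoff at $\log n/(d(1-\gamma))$.

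To analyse the pair, I would use the locally tree-like structure of $G$. With high probability, the ball of radius $R=c\log_{d-1} n$ around vertex $1$, with $c$ small, contains at most one cycle. On the infinite $d$-regular tree $\T_d$, the graph distance $D_t$ between $X_t$ and $Y_t$ is a birth-and-death chain on $\{0,1,2,\dots\}$:
\begin{itemize}
\item for $k\ge1$, it moves up at rate proportional to $d-1$ per walker and down at rate $1$ per walker, which is the drift of the tree random walk;
\item at the ``slow bond'' $0\leftrightarrow1$ the rates are different, reflecting that the two walkers at the same vertex separate only when an edge at that vertex rings, and that adjacent walkers merge when their common edge rings.
\end{itemize}
This is the auxiliary biased birth-and-death chain mentioned in the abstract. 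The return probability $\mathbf P(D_t=0)$ decays like $e^{-d(1-\gamma)t}$ up to polynomial factors. The exponent is governed by two mechanisms:
\begin{itemize}
\item the bottom of the spectrum of the free part, which is the square of the random walk decay and gives the rate $d(1-\varrho)$;
\item an isolated eigenvalue produced by the slow bond when it is sufficiently ``attractive''. Solving the resulting quadratic should produce $\sigma$ and show that it overtakes $\varrho$ exactly when $d>10$ (check: at $d=10$, $\sigma=(9-1)\cdot 3/40=3/5=\varrho$).
\end{itemize}
In addition, the value $n\,\mathbf P(D_t=0)$ on the tree must be compared with the equilibrium value $1$. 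Cutoff occurs when $n e^{-d(1-\gamma)t}(\log n)^{O(1)}$ crosses order one, and the polynomial prefactor accounts for the $a\log\log n$ window.

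For the lower bound \eqref{eq:theorem-lb}, I would couple the pair on $G$ with the tree pair up to the exit time from the ball of radius $R$. Up to time $T(a)$ the walkers are at distance $O(\log n)$ from vertex $1$, with deviations controlled by large deviations. By monotonicity in $t$, it suffices to show that $n\,\mathbf P^{\T}(D_t=0, \text{pair stays in ball})\to\infty$ for $a\to-\infty$. This follows from an explicit asymptotic of the tree return probability, $\mathbf P^{\T}(D_t=0)\asymp t^{-\beta}e^{-d(1-\gamma)t}$, taking $\beta=3/2$ in the spectral-edge regime and $\beta=0$ in the isolated-eigenvalue regime. When $d>10$ the returns occur via the walkers staying bound near distance $0$, while $X_t$ itself wanders only $O(t)$ far. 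The single possible cycle only increases return probabilities and does not hurt this bound.

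For the upper bound \eqref{eq:theorem-ub}, which I expect to be the main obstacle, I would split the event $\{X_t=Y_t\}$ according to the last time $s$ at which $D_s\le 1$ before separation. Until that time the tree analysis applies and gives weight $\lesssim e^{-d(1-\gamma)s}$ times polynomial factors. After the walkers separate, they evolve essentially as two independent random walks until they meet again. The random walk on a random regular graph mixes in $L^2$ at time $\log n/(d(1-\varrho))$ by known spectral results, namely near-Ramanujan bounds with $\lambda_2\le 2\sqrt{d-1}+o(1)$ w.h.p. (Friedman, Bordenave). This gives $n\,\mathbf P(X_t=Y_t\mid\text{separated at }s)\le 1+o(1)$ once $t-s$ exceeds the random walk $L^2$ cutoff started from a tree-like pair of vertices. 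The delicate point is to control the interaction with short cycles and with the ``bound state'' uniformly over the graph. For this I would use the at-most-one-cycle property within radius $R$, together with a global spectral gap estimate for the pair generator on $G$, namely that its second eigenvalue is at least $d(1-\gamma)-o(1)$ in the appropriate normalization. The latter bounds the contribution of long time spans on which the walkers never reach a tree-like separated configuration.
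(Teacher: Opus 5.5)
Your duality, the slow-bond chain on $\T_d$ and the two competing mechanisms are right, and any polynomial prefactor is harmless at the $\log\log n$ scale. You miss the critical case $d=10$, where the prefactor is $t^{-1/2}$, but that is immaterial. The gaps are in transferring from $\T_d$ to $G$.

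\textbf{Lower bound.} You reduce to showing $n\,\P^{\T_d}(D_t=0,\ \text{pair stays in }B(o,R))\to\infty$ with $R=c\log_{d-1}n$ and $c$ small. This reduction fails precisely in the new regime $d>10$.
\begin{itemize}
\item There the return probability is carried by the bound state, in which the walkers sit at the same vertex a positive fraction of the time, and most of the time for large $d$.
\item While together they jump jointly at rate $d/4$, outward with probability $(d-1)/d$, and these jumps do not change $D$. So on $\{D_t=0\}$ their common position drifts ballistically.
\item For large $d$, at $t=T(a)\approx\frac2d\log n$ the pair is at distance about $\frac12\log n$ from the root. This is far beyond $c\log n/\log(d-1)$, and indeed beyond the diameter of $G$.
\item Confining the pair to the ball costs an extra factor $n^{-\delta}$, $\delta>0$. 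Hence $n\,\P^{\T_d}(D_{T(a)}=0,\ \text{pair stays in }B(o,R))\to0$ for every fixed $a$.
\end{itemize}
What is needed is a comparison that is global in space and time, namely Proposition~\ref{prop:lb}. One lifts ${\rm CRW}(G)$ to the universal cover. There the walkers perform ${\rm CRW}(\T_d)$ moves, plus ``teleport'' moves whenever they are at $\T_d$-distance $\ge2$ but $G$-distance $\le1$. A Duhamel expansion, together with the monotonicity of $i\mapsto R_s(i,0)$, shows that these extra moves only increase the diagonal probability. This gives $\P^{{\rm CRW}(G)}_{x,x}(X_t=Y_t)\ge R_t(0,0)$ for every $d$-regular $G$ and all $t\ge0$. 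Your remark that ``the single possible cycle only increases return probabilities'' is exactly this lemma, asserted without proof. Once it is proved in general, the localization is superfluous.

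\textbf{Upper bound.} The upper-bound plan does not close either.
\begin{itemize}
\item Treating the walkers as independent after the separation time $s$ is not justified. To meet at time $t$ they must re-enter $\{D\le1\}$ and interact, possibly re-binding.
\item The ``tree analysis up to time $s$'' on $G$ runs into the same localization problem. The at-most-one-cycle property of $B(1,R)$ says nothing once the pair has left that ball.
\item The global gap estimate you invoke is false as stated. Let $\phi$ be an eigenfunction of ${\rm RW}(G)$ for the second eigenvalue $\lambda_2$ of the transition matrix. Then $(x,y)\mapsto\phi(x)$ is an eigenfunction of the pair generator with eigenvalue $-\frac d2(1-\lambda_2)$. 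By Alon--Boppana, $\frac d2(1-\lambda_2)\le\frac d2(1-\varrho)+o(1)<d(1-\gamma)$.
\item Restricted to the orthogonal complement of one-coordinate functions, such a gap bound would be a Friedman-type theorem for a new operator, and you give no route to it.
\end{itemize}

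The paper needs none of this. It writes $\P^{{\rm CRW}(G)}_{1,1}(X_t=Y_t)\le R_t(0,0)+\P^{{\rm CRW}(G)}_{1,1}(\cW_t^c)$, where $\cW_t$ is the event that the explored orbit is a tree. An annealed exploration (Lemma~\ref{lemma:annealing}) gives $\Ex[\P^{{\rm CRW}(G)}_{1,1}(\cW_t^c)]=O(\log^4n/n)$, no matter how far the walkers travel, since only $O(\log^2n)$ vertices are explored. After multiplying by $n$, this polylogarithmic loss is absorbed via Markov's inequality. One runs an extra $b\log\log n$ time and uses the Aldous--Lanoue contraction \eqref{eq:AL} with the quasi-Ramanujan gap \eqref{eq:quasi-Ramanujan}. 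This is affordable exactly because the cutoff window is $\log\log n$.
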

		In words, this theorem establishes that the averaging process on large  random $d$-regular graphs exhibits, with high probability (w.h.p.) with respect to the law of the underlying random graph and from a typical starting vertex, an $L^2$-cutoff at time $\frac1{d\left(1-\gamma\right)}\log n$, with a cutoff window of size at most $\log \log n$.
	We collect below some further insights around our main result, and some related open questions.
Throughout, all asymptotic statements are taken in the limit $n\to \infty$ (unless stated otherwise). We use standard asymptotic notation: for positive functions $f$ and  $g$, we write  $f=O(g)$ (resp.\ $o(g)$) if $\limsup (f/g)<\infty$ (resp.\ $\lim (f/g)=0$). We write $f\asymp g$ if $f=O(g)$ and $g=O(f)$, whereas $f\sim g$ if $\lim (f/g)=1$. 
	\subsection{Further remarks and open problems}

\subsubsection{Comparison with the random walk} 
	\begin{figure}[t]
	\centering
	\includegraphics[width=6cm]{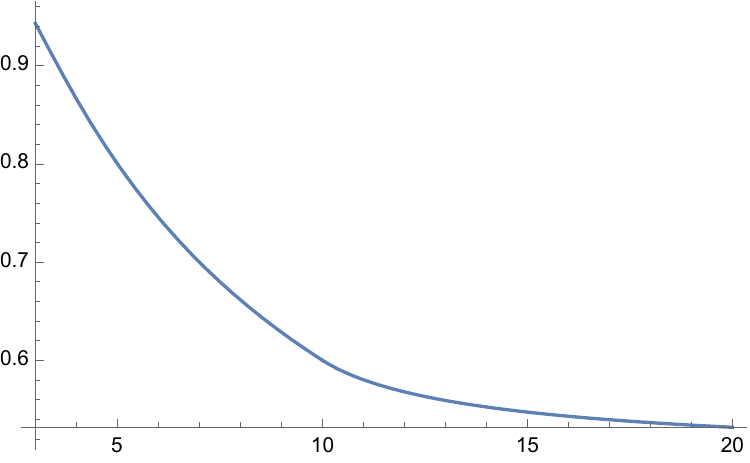}
	\qquad
	\includegraphics[width=6cm]{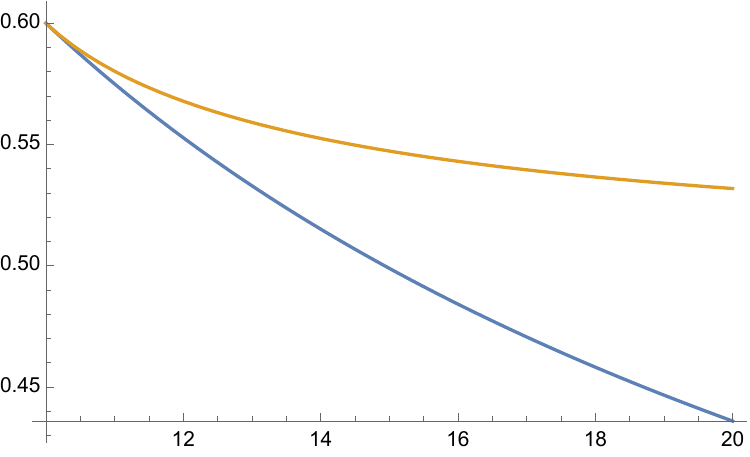}
	\caption{\textit{Left}: Plot of the function $d\mapsto \gamma_d$, for $d\ge 3$. \textit{Right}: Plots of the functions $d\mapsto \varrho_d$ (blue) and   $d\mapsto \sigma_d$ (orange), for $d\ge 10$. }
	\label{fig:gamma}
\end{figure}

	The function  $\gamma=\gamma_d$ given in \eqref{eq:gamma} satisfies the following three simple properties (see also Figure \ref{fig:gamma}):
	\begin{equation}\label{eq:gamma-properties}
		\gamma\ \text{is strictly decreasing in}\ d\ge 3\ ,\qquad \gamma=\frac{2\sqrt 2}3\ \text{for}\ d=3\ ,\qquad \gamma\xrightarrow{d\to \infty}\frac12\ .
	\end{equation}
Moreover, as shown in Lemma \ref{lemma:rd} below (see also Figure \ref{fig:gamma}), 
	\begin{equation}
		\sigma >\varrho\ ,\qquad d>10\ .
	\end{equation}
	This is the quantitative mark that the averaging process $L^2$-mixing time is strictly larger than its ${\rm RW}$ counterpart when $d>10$; for $d\le 10$, since $\gamma =\varrho$, the $L^2$-mixing of the two processes coincide, at first order. Indeed, by adapting \cite[Proposition 6]{lubetzky_peres_ramanujan_2016} to our continuous-time (lazy) setup, their result reads as follows:
	for every $\varepsilon> 0$ and  w.h.p., 
	\begin{equation}
		t_{\rm mix,2}^\rw(\eps)\coloneqq \inf\bigg\{t>0: \bigg\|\frac{\pi_t^{\delta_1}}{\pi}-1\bigg\|_2^2<\eps\bigg\}\sim \frac1{d(1-\varrho)}\log n\ ,
	\end{equation}
	while, by Theorem \ref{th:L2}, one has for the averaging process
	\begin{equation}
		t_{\rm mix,2}(\eps)\coloneqq \inf\bigg\{t>0: \E_{\delta_1}\!\left[\left\|\frac{\eta_t}{\pi}-1\right\|_2^2\right]<\eps\bigg\}\sim \frac1{d(1-\gamma)}\log n\ ,
	\end{equation}
	and the two right-hand sides above coincide only for $d\le 10$. 
	
	See Remark \ref{rem:transition} below for a further motivation lying behind this transition.

\subsubsection{Large-degree regime}
It is well known that, when the degree 
$d$ of a random 
$d$-regular graph grows sufficiently quickly as 
$n\to \infty$, many of its properties align with those of a mean-field model.
Mixing, both in $L^1$ and $L^2$, of the averaging process on the complete graph with $n$ vertices has been studied in \cite{chatterjee2020phase}, and cutoff has been proved for both metrics.
The $L^2$-cutoff time  is given (in continuous-time) by $\frac{2}{n}\log n$, with a cutoff window of size $\frac1n$. By formally taking $d\sim n$ in Theorem \ref{th:L2}, since $\gamma\to\frac{1}{2}$ as $d\to \infty$ (cf.\ \eqref{eq:gamma-properties}), one recovers the leading-order term of the mixing time on the complete graph. 

Actually, after an inspection of the proof of Theorem \ref{th:L2}, one may note that our strategy for the lower bound---ultimately based on \eqref{eq:l2-norm}, \eqref{eq:lb-CRW(G)-Td}, \eqref{eq:CRW-infinite-tree-lb} below---is robust enough to rigorously justify a joint limit $d, 	n\to \infty$ in \eqref{eq:theorem-lb}, and predicts an $L^2$-mixing time asymptotically equal to $\frac2d\log n$. Our arguments for the upper bound (in particular, \eqref{eq:markov-l2}) are too weak, though. Nevertheless, one may use a more direct and elementary approach via Aldous-Lanoue's inequality for the averaging process \cite[Proposition 2]{aldous_lecture_2012}, which holds  on any graph $G$: for all mass configurations $\eta$, 
\begin{equation}\label{eq:AL}
	\E_\eta\!\left[\left\|\frac{\eta_t}{\pi}-1\right\|_2^2\right]\le e^{-{\rm gap}\,t} \left\|\frac{\eta}\pi-1\right\|_2^2\ ,\qquad t \ge 0\ ,
\end{equation} 
where  ${\rm gap}={\rm gap}(G)\ge 0$ denotes the spectral gap of ${\rm RW}(G)$.
 By exploiting  that, for random $d$-regular graphs on $n$ vertices with $d, n\to \infty$, one has, w.h.p.,  ${\rm gap}\sim \frac{d}2$ (see, e.g., \cite{bauerschmidt_edge_2020,he_spectral_2024}), one recovers from \eqref{eq:AL} $\frac{2}d\log n$ as a matching upper bound.

For further details on this regime and the size of the cutoff window, we refer to Remark  \ref{rem:window}.

\subsubsection{$L^p$-cutoff and phase transitions}We focus on 
$L^2$-mixing; extending the analysis to 
$L^p$ (especially 
$p=1$) is a natural direction. While we do not pursue a detailed analysis here, relatively soft arguments (see, e.g., \cite[Section 2]{caputo_quattropani_sau_cutoff_2023} for a survey)  already allow us to derive some $L^p$-bounds, $p\in [1,2]$, and suggest an analogous phase transition for $p=1$, as we now briefly explain.

Thanks to the monotonicity in $p\ge 1$ of the metrics, that is,
\begin{align}
	\E_\eta\!\left[\left\|\frac{\eta_t}{\pi}-1\right\|_p^p\right]\le 	\E_\eta\!\left[\left\|\frac{\eta_t}{\pi}-1\right\|_q^q\right]\ ,\qquad 1\le p\le q\ ,\ t\ge 0\ ,
\end{align}
our Theorem \ref{th:L2} already provides $\frac1{d(1-\gamma)}\log n+O(\log \log n)$ as an upper bound for the $L^p$-mixing time, for all $p\le 2$. 

For the lower bound, we can proceed in two ways. One route compares the averaging process
to the random walk via \eqref{eq:comparison}, together with \cite{lubetzky2010cutoff} and
\cite[Proposition~6]{lubetzky_peres_ramanujan_2016}. Another route uses the
relative-entropy lower bound of \cite{movassagh_repeated2022}, see also \cite[Proposition 2.9]{caputo_quattropani_sau_cutoff_2023}.
Heuristically, these two
bounds capture different mechanisms required for the averaging process to be well mixed:
the comparison argument reflects the need for the mass to \emph{explore}, in average, a substantial
portion of the graph, whereas the entropy argument reflects the need for sufficient
\emph{fragmentation}. In either case, we obtain lower bounds of the form $c\log n$, for some $c>0$ depending on $d\ge 3$ and possibly on the accuracy parameter $\eps>0$ in the mixing time $t_{{\rm mix},p}(\eps)$. This yields a pre-cutoff (see, e.g., \cite[Eq.\ (18.5)]{levin2017markov}) for all $p\in[1,2)$.

Further, if specialized to the $L^1$-setting, these two lower bounds read, respectively, as
\begin{align}
	t_{{\rm mix},1}(\eps)&\ge \frac{2}{d-2}\frac1{\log(d-1)}\log n-O(\sqrt{\log n})\ ,\\
	t_{{\rm mix},1}(\eps)&\ge \frac{1-\eps}{d\log 2}\log n-O(\sqrt{\log n})\ ,
\end{align}
w.h.p.\ and for all $\eps \in (0,1)$ small enough.
A simple manipulation shows that, if $d> 7$, the second lower bound obtained via \cite[Theorem 1]{movassagh_repeated2022} improves upon the first one, which we know to coincide with the $L^1$-cutoff time of the random walk on random $d$-regular graphs, $d\ge 3$. In other words, a finite-degree phase transition for the averaging process' $L^1$-mixing may occur at some $3\le d\le 7$. Establishing the occurrence of $L^1$-cutoff and, possibly,  its precise location for the averaging process on this and other random graph models remains an interesting open problem.

\section{Proof of Theorem \ref{th:L2}}
\label{sec:proof-main}
In this section, we prove our main result, and start by introducing some auxiliary processes which will play a central role in our analysis.

\subsection{Coupled random walks and the slow-bond birth-and-death chain}
As already exploited in  \cite{aldous_lecture_2012}, there is a two-walk version of \eqref{eq:AVG-RW}: for all graphs $G=(V,E)$ and initial configurations $\eta$, 
\begin{equation}
	\E_\eta[\eta_t(x)\eta_t(y)]=\P_{\eta\otimes \eta}^{\scriptscriptstyle{\rm CRW}(G)}(X_t=x,Y_t=y)\ ,\qquad x,y \in V\ ,\ t\ge 0\ .
\end{equation}
Here, $\P^{{\scriptscriptstyle{\rm CRW}(G)}}$ stands for  the law of two 	\textit{coupled random walks} on $V\times V$, and  the subscript indicates the initial distribution of the two walks; when $\eta = \delta_x$, $x\in V$, we simply write $\P_{x,x}^{\scriptscriptstyle{\rm CRW}(G)}$.
Their dynamics may be briefly described as follows (for more details, see, e.g., \cite[Section 2.4]{aldous_lecture_2012} or \cite[Section 2.2]{caputo_quattropani_sau_cutoff_2023}): if the rate-1 Poisson clock attached to $xy\in E$ rings, each walker currently at either $x$ or $y$ independently crosses the edge with probability $1/2$. In particular, both $X_t$ and $Y_t$ marginally evolve as a  ${\rm RW}(G)$, and their motion is independent  as long as they sit at graph-distance larger than one. 

By spelling out $\|\frac{\eta_t}{\pi}-1\|_2^2= n\sum_{x\in V}\eta_t(x)^2-1$ and taking expectation, one readily gets 
\begin{equation}
	\label{eq:l2-norm}
	\E_x\!\left[\left\|\frac{\eta_t}{\pi}-1\right\|_{2}^2\right] =
	n\, \P^{\scriptscriptstyle{\rm CRW}(G)}_{x,x}(X_t=Y_t) -1\ ,\qquad x\in V\ ,\  t \ge 0\ 
\end{equation}
The general idea behind the proof of Theorem \ref{th:L2} is to effectively compare the (random) probability on the right-hand side of \eqref{eq:l2-norm} 	with
\begin{align}\label{eq:crw-Td}
	\P^{\scriptscriptstyle{\rm CRW}(\T_d)}_{o,o}(X_t=Y_t)\ ,\qquad t \ge 0\ ,
\end{align}
where $\mathds T_d$ denotes the $d$-regular infinite tree, $o$ is an arbitrary vertex (which we will refer to as the \emph{root}), and ${\rm CRW}(\mathds T_d)$ stands for a system of coupled random walks evolving on $\mathds T_d$. Analogously, $\P_{o,o}^{\scriptscriptstyle{\rm CRW}(\T_d)}$ denotes the law of ${\rm CRW}(\T_d)$ when both walks start from $o$.	

The advantage of working with \eqref{eq:crw-Td} is that it may be recast into a transition probability of a continuous-time birth-and-death (BD) chain, whose transition probabilities can be asymptotically estimated.
\begin{definition}[Slow-bond BD chain] \label{def:BD-chain}Fix $d\ge 3$.
	Let $R_t$ denote the transition kernel of the  continuous-time biased random walk on $\N_0:=\{0,1,2,\ldots\}$ with transition rates $p^R(i)$ (resp.\ $q^R(i)$) of jumping from $i$ to $i+1$ (resp.\ $i-1$) given by
	\begin{equation}\label{eq:rates-slow-bond}
		p^R(i):= \begin{dcases}
			d/2 &\text{if}\ i=0\\
			d-1 &\text{if}\ i\ge 1
		\end{dcases}\ ,\qquad q^R(i):= \begin{dcases}
		0 &\text{if}\ i=0\\
			1/2 &\text{if}\ i=1\\
			1 &\text{if}\ i\ge2\ .
		\end{dcases}
	\end{equation}
	We refer to $R_t$ as the (continuous-time) \emph{slow-bond BD 	chain}.
\end{definition}  
From the definition of ${\rm CRW}(\T_d)$ and $R_t$, it is immediate to verify that, 	for all $t\ge 0$,
\begin{equation}\label{eq:P-CRW-R}
	\P^{{\rm CRW}(\T_d)}_{o,o}(X_t=Y_t) = R_t(0,0)\ .
\end{equation}
This identity will be the first key step for deriving the following sharp asymptotics for times 
$t=T(a)$ of the form  \eqref{eq:def-Ta}. Its proof takes the whole Section \ref{sec:CRW-infinite-tree}.
\begin{proposition}\label{prop:CRW-infinite-tree-CT}
	For all $d\ge 3$, we have
	\begin{equation}\label{eq:CRW-infinite-tree-lb}
		\lim_{a\to -\infty}\liminf_{n\to \infty}n\,\P^{{\rm CRW}(\mathds T_d)}_{o,o}(X_{T(a)}=Y_{T(a)})=\infty\ ,
	\end{equation}
	\begin{equation}\label{eq:CRW-infinite-tree-ub}
		\lim_{a\to \infty}\limsup_{n\to \infty}	n\,\P^{{\rm CRW}(\mathds T_d)}_{o,o}(X_{T(a)}=Y_{T(a)})=0\ .
	\end{equation}
\end{proposition}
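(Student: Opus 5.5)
The plan is to show that
\[
R_t(0,0)=\exp\bigl(-d(1-\gamma)\,t+O(\log t)\bigr)\qquad (t\to\infty).
\]
Then \eqref{eq:P-CRW-R} and \eqref{eq:def-Ta} give
\[
n\,\P^{{\rm CRW}(\T_d)}_{o,o}\bigl(X_{T(a)}=Y_{T(a)}\bigr)=(\log n)^{-d(1-\gamma)a+O(1)},
\]
with the $O(1)$ uniform in $a$. This diverges for $a$ very negative and vanishes for $a$ very positive, which gives \eqref{eq:CRW-infinite-tree-lb} and \eqref{eq:CRW-infinite-tree-ub}. So everything reduces to the exponential rate of $R_t(0,0)$, up to polynomial corrections in $t$.

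\emph{Step 1: Laplace transform.} I would compute $\hat R(\lambda):=\int_0^\infty e^{-\lambda t}R_t(0,0)\,dt$ by first-step analysis and renewal at $0$. Away from the slow bond, i.e.\ on $\{i\ge1\}$, the chain is a homogeneous walk with up rate $d-1$, down rate $1$ and total rate $d$. For this walk the transform $h(\lambda)=\E_2[e^{-\lambda\tau_1}]$ solves $(d-1)h^2-(d+\lambda)h+1=0$, so
\[
h(\lambda)=\frac{(d+\lambda)-\sqrt{(d+\lambda)^2-4(d-1)}}{2(d-1)} .
\]
Decomposing excursions from $1$ gives $\phi(\lambda):=\E_1[e^{-\lambda\tau_0}]=\tfrac12\big/\bigl(\lambda+d-\tfrac12-(d-1)h(\lambda)\bigr)$. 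Decomposing at $0$, where the exit rate is $d/2$, gives
\[
\hat R(\lambda)=\frac{1}{\lambda+\tfrac d2-\tfrac d2\,\phi(\lambda)} .
\]
This function is analytic in $\mathbb C$ minus the branch cut $\lambda\le -\lambda_c$, where $\lambda_c:=d-2\sqrt{d-1}=d(1-\varrho)$, except possibly for a real pole. I would solve the denominator equation explicitly; after clearing the square root it becomes a quadratic. The expectation is a real root $\lambda=-d(1-\sigma)$ on the principal branch, lying to the right of $-\lambda_c$, exactly when $d>10$. For $d\le10$ the root either falls on the wrong sheet or coincides with the branch point (the case $d=10$). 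This computation should also reproduce $\sigma>\varrho$ for $d>10$, as in Lemma~\ref{lemma:rd}.

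\emph{Step 2: inversion.} Alternatively, and more robustly, I would symmetrize the reversible chain. This writes $R_t(0,0)=\int e^{-tx}\mu(dx)$ for the spectral measure $\mu$ of the associated Jacobi operator, whose Stieltjes transform is $\hat R$. The support of $\mu$ is then $\{d(1-\sigma)\}\cup[\lambda_c,\lambda_+]$ if $d>10$, and $[\lambda_c,\lambda_+]$ otherwise.

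In the case $d>10$ there is an atom of positive mass at $d(1-\sigma)$; its mass is the residue of $\hat R$ there. This gives the lower bound $R_t(0,0)\ge c\,e^{-d(1-\sigma)t}$. The matching upper bound follows because the rest of $\mu$ sits at $x\ge\lambda_c>d(1-\sigma)$.

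In the case $d\le10$ the upper bound $R_t(0,0)\le e^{-\lambda_c t}$ is immediate. For the lower bound I need $\mu([\lambda_c,\lambda_c+\delta])\ge c\,\delta^{k}$ for some finite $k$ (one expects $k=3/2$, or $k=1/2$ at the resonance $d=10$). I would read this off from the square-root behaviour of $\operatorname{Im}\hat R$ near the branch point, via Stieltjes inversion, using that the denominator of $\hat R$ does not vanish to infinite order there. Integrating then gives $R_t(0,0)\ge c\,t^{-k}e^{-\lambda_c t}$.

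\emph{Main obstacle.} The delicate part is the case analysis of the root of the denominator in Step 1. One must track which branch of the square root is the physical one and identify precisely the threshold $d=10$. The near-edge density estimate in the borderline case $d=10$ is also delicate. Both are explicit but fiddly algebra, and only polynomial-in-$t$ control is needed, which the $a\log\log n$ window in $T(a)$ absorbs.
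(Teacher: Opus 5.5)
Your proposal is correct, and it differs from the paper mainly in how the transform is inverted.

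\emph{Step 1 is the paper's computation in another variable.} Because $c_k$ is the rate-$d$ skeleton of $R_t$ (cf.\ \eqref{eq:R-c}), one has $\hat R(\lambda)=\frac{1}{d+\lambda}\,g\bigl(\frac{d}{d+\lambda}\bigr)$. Under $z=d/(d+\lambda)$:
the pole $z_2=\sigma^{-1}$ becomes $-d(1-\sigma)$;
the branch point $\varrho^{-1}$ becomes $-\lambda_c$;
the point $-\varrho^{-1}$ becomes the far end $-(d+2\sqrt{d-1})$ of the cut;
the removable points $z=1$ and $z_1$ of Lemma~\ref{lemma:rd} are your wrong-sheet roots.

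\emph{The algebra you defer does close.} Writing $\lambda+d=(1+(d-1)h^2)/h$ gives $\phi=h/(2-h)$. The denominator then vanishes iff $(h-1)\bigl((d-1)h^2-(d-1)h+2\bigr)=0$. The factor $h=1$ is spurious: it is $\lambda=0$ on the wrong sheet. The quadratic has non-real roots for $d\le 8$, and its larger root $h\ge\frac12$ is never physical. The smaller root satisfies the physical-sheet condition $0<h\le (d-1)^{-1/2}$ iff $d\ge 10$, with equality exactly at $d=10$. For it, $\lambda+d=d-1-1/h=d\sigma$. So your ``quadratic'' appears only after dividing out $h=1$, and the cut is really $[-d-2\sqrt{d-1},-\lambda_c]$; both points are harmless.

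\emph{The genuine difference is Step 2.} The paper first extracts $c_k\sim C k^{\beta}\gamma^k$ from transfer theorems of singularity analysis. This requires $\Delta$-analyticity and handling the second dominant singularity $-\varrho^{-1}$ with its $(-1)^k$ term. It then Poissonizes, using asymptotics of negative Poisson moments, to get the two-sided estimate $R_t(0,0)\asymp t^{\beta}e^{-d(1-\gamma)t}$.

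You instead use positivity of the spectral measure of the reversible generator. The bottom of the spectrum gives the upper bound at once. For $d>10$ the lower bound comes from the atom, whose mass is a positive residue. For $d\le 10$ it comes from a polynomial lower bound on the edge mass via Stieltjes inversion. The needed nonvanishing of the coefficient of $\sqrt{\lambda+\lambda_c}$ in the denominator follows from $\frac{d\phi}{dh}=\frac{2}{(2-h)^2}\neq 0$. In continuous time, $-\varrho^{-1}$ is sent to the top of the spectrum, so the periodicity issue disappears.

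\emph{What each route buys.} Yours is more elementary and needs neither transfer theorems nor Poisson-moment estimates. However, for $d\le 10$ it does not produce the matching factor $t^{\beta}$ in the upper bound. You therefore get only $R_t(0,0)=e^{-d(1-\varrho)t+O(\log t)}$ rather than the sharp order the paper records. For this proposition that loss is absorbed by the $a\log\log n$ window, as you say.
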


Granted the validity of this result and the above relations,  in the next two sections, we turn to the proof of the two claims in Theorem \ref{th:L2}.

	\begin{remark}\label{rem:transition}
		A departure from the ${\rm RW}$ behavior for the $L^2$-mixing time has to be expected for sufficiently large (but finite) $d\ge3$, as the following  crude argument shows. 

		By \eqref{eq:l2-norm}, the $L^2$-mixing is dictated by transition probabilities that two coupled random walks starting from the same site sit together at time $t$. By estimating this probability from below with the probability that the two walks stay together up to time $t> 0$, we get
		\begin{align}
			\P^{{\rm CRW}(G)}_{x,x}(X_t=Y_t)\ge \P^{{\rm CRW}(G)}_{x,x}(X_s=Y_s\,,\,\text{for all}\ s\le t) = e^{-\frac{d}{2}t}\ ,
		\end{align}
		yielding
		$t_{{\rm mix},2}(\eps)\ge \frac2{d}\log n+\frac{a}{d}$, for all $\eps>0$ and  for some $a=a_\eps\in \R$. This lower bound is strictly larger than $\frac{1}{d(1-\varrho)}\log n$ as soon as $d\ge 15$. Nevertheless, on the one hand, this argument does not yet capture the critical value $d=10$ and, on the other, it does not show that, for $d\ge3$ small enough, the inequality in \eqref{eq:comparison} is (asymptotically) an identity.
	\end{remark}

	\subsection{Lower bound}
In view of \eqref{eq:comparison} with $p=2$,   which holds $\Prob$-a.s.,  and  \cite[Lemma 2.2]{lubetzky_peres_ramanujan_2016}, we readily obtain a  lower bound for Theorem \ref{th:L2}, which turns out to be sharp  whenever $3\le d\le 10$.
However,  such an estimate needs to be refined to derive the lower bound for $d> 10$. For this purpose, recall \eqref{eq:l2-norm}.
By extending the arguments in the proof of \cite[Lemma 2.2]{lubetzky_peres_ramanujan_2016}, we prove the following lower bound, which holds for any deterministic $d$-regular graph.
\begin{proposition}\label{prop:lb}
	Let $G=(V,E)$ be any (non-random) $d$-regular graph, $d\ge 3$. Then, 
		\begin{equation}\label{eq:lb-CRW(G)-Td}
			\inf_{x\in V}\P^{\scriptscriptstyle{\rm CRW}(G)}_{x,x}(X_t=Y_t)\ge \P^{\scriptscriptstyle{\rm CRW}(\T_d)}_{o,o}(X_t=Y_t)\ ,\qquad t \ge 0\ ,
		\end{equation}
		where we recall that $\mathds T_d$ denotes the $d$-regular infinite tree and $o$ is an arbitrary vertex.
\end{proposition}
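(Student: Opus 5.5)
The plan is to \emph{lift} ${\rm CRW}(G)$ to the universal cover $\T_d$ of $G$, and to compare the tree-distance of the lifted pair with the slow-bond BD chain $R_t$ of Definition~\ref{def:BD-chain}. Fix $x\in V$ and a covering map $\phi:\T_d\to G$ with $\phi(o)=x$; such a map exists since $G$ is $d$-regular. Run ${\rm CRW}(G)$ from $(x,x)$ and lift each trajectory edge by edge. Lifting is unique once $\tilde X_0=\tilde Y_0=o$, because each jump of a walker along a $G$-edge $uv$ corresponds to a unique tree-edge at the current lifted position. Since $\phi(\tilde X_t)=X_t$ and $\phi(\tilde Y_t)=Y_t$, we get $\{\tilde X_t=\tilde Y_t\}\subseteq\{X_t=Y_t\}$, and hence
\begin{equation}
\P^{\scriptscriptstyle{\rm CRW}(G)}_{x,x}(X_t=Y_t)\ge \P(D_t=0)\ ,\qquad D_t:={\rm dist}_{\T_d}(\tilde X_t,\tilde Y_t)\ .
\end{equation}
By \eqref{eq:P-CRW-R}, it then suffices to show $\P(D_t=0)\ge R_t(0,0)$. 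To get this, I will prove that $D_t$ is stochastically dominated by the slow-bond chain started at $0$, via a monotone coupling.

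First I compute the jump rates of $D_t$ in the regimes where the lift is faithful. If $D_t=0$, the walkers sit at the same $G$-vertex and have the same lift. Each of the $d$ edge clocks separates them when exactly one walker crosses, which happens with probability $1/2$; this gives rate $d/2$ to go to $1$, matching $p^R(0)$. If $D_t=1$, the projections are $G$-adjacent through the projected edge, and $G$ is simple. The shared edge merges the walkers at rate $1/2=q^R(1)$. The remaining $d-1$ edges at each endpoint are distinct from the shared edge and from each other's edges (no multiple edges), so each walker moves away at rate $(d-1)/2$, for a total of $d-1=p^R(1)$. A simultaneous jump of both walkers on a single clock cannot happen here, since two distinct vertices share at most one edge.

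The delicate regime is $D_t\ge2$. Here the projections may coincide or be adjacent in $G$ (short cycles), so a single clock can move both walkers at once. This produces jumps of $D$ by $0$ or $\pm2$, and the pair is no longer the tree system. Even so, for each walker exactly one of its $d$ incident tree-edges points toward the other walker. The marginal rate of each walker along each incident edge is still $1/2$. So the total rate of distance-decreasing moves is at most that of independent walkers, i.e.\ $1=q^R(i)$ for $i\ge2$, while outward moves have rate at least $d-1$.

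The main obstacle is to turn this into a genuine domination in the presence of simultaneous $\pm2$ moves. I would first try a generator comparison. Let $L$ be the generator of $R$. For every nondecreasing bounded $f:\N_0\to\R$ and every lifted state with $D=i\ge2$, I aim to show that the $D$-part of the generator applied to $f$ is at least $Lf(i)$. Combined with the fact that $R_t$ preserves monotone functions (it is a BD chain), a standard Duhamel/monotonicity argument then gives $\E f(D_t)\ge R_t f(0)$. Taking $f=\mathbf 1_{[1,\infty)}$ yields $\P(D_t=0)\ge R_t(0,0)$. To verify the pointwise inequality, I would decompose the simultaneous jumps edge by edge. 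A joint move of both walkers along a single $G$-edge can be dominated by two sequential single moves: a joint $-2$ requires both walkers to pick their unique inward edge, and it has rate $1/4$ per coincident edge, which is compensated by the lost single inward moves. If this pointwise comparison fails in some corner case, the fallback is to modify the lifting rule at coinciding projections. When the projections agree but the lifts differ, I would resample the lifted moves with a monotone coupling that realizes independent tree walkers, which is legitimate because only the projection's law must be that of ${\rm CRW}(G)$.
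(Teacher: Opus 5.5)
Your lift and your rate computations at $D\in\{0,1\}$ are fine, but the comparison step has a genuine gap, and it begins with a sign error. Write $\mathcal L_D f$ for the $D$-part of the generator applied to $f$. An inequality $\mathcal L_D f\ge Lf$ for all nondecreasing $f$ would give $\E f(D_t)\ge R_tf(0)$, i.e.\ $D_t$ stochastically \emph{dominates} the slow-bond chain. With $f=\mathbf 1_{[1,\infty)}$ this reads $\P(D_t\ge1)\ge 1-R_t(0,0)$, that is $\P(D_t=0)\le R_t(0,0)$. That is the opposite of what you need, and also the opposite of your stated goal $D\preceq R$. Your heuristic for $D\ge2$ (inward rate at most $1$, outward at least $d-1$) points the same wrong way.

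If you flip the target to $\mathcal L_Df\le Lf$, it is false for the natural lift at states where the tree-distance is $k\ge2$ but the projections coincide or are adjacent. Concretely, let $x$ lie on a triangle $x,u,v$, and let the lifts $z,w$ of $x$ be at tree-distance $k=3$. This state is reachable: one walker goes once around the triangle while the other stays put. The inward edges are distinct $G$-edges at $x$, namely $e_z=xv$ and $e_w=xu$. When $e_z$ rings, the same clock that can move the $z$-walker inward can move the $w$-walker outward. Hence $D$ decreases only at total rate $1/2$, and for $f=\mathbf 1_{[k,\infty)}$ you get $\mathcal L_Df(k)=-1/2>-1=Lf(k)$. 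Likewise, each of the other $d-2$ edges produces a joint $+2$ jump at rate $1/4$, so $f=\mathbf 1_{[k+2,\infty)}$ gives $\mathcal L_Df(k)=(d-2)/4>0=Lf(k)$. So the unmodified lift is not dominated by $R$ at the generator level. Splitting the joint $-2$ moves into sequential single moves does not touch the actual obstruction.

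Your fallback has the right seed: only the law of the projection matters. But resampling so as to ``realize independent tree walkers'' is impossible. At a state with $\varphi(z)=\varphi(w)=x$, ${\rm CRW}(G)$ requires simultaneous moves driven by a common clock (e.g.\ $(x,x)\to(y,y)$ at rate $1/4$ per edge). Moreover, nearest-neighbour lifts of the projected moves are forced, so the only freedom is to let the lifted pair jump non-nearest-neighbour.

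This is the ingredient the paper uses. In ${\rm CRW}_*(\T_d)$, whenever the ringing edge $xy$ contains both projections while the tree-distance is $\ge2$, the pair is re-lifted (``teleported''). It moves to one of four tree configurations at distance $\le1$, projecting to $(x,x),(y,y),(x,y),(y,x)$ with probability $1/4$ each. Since $Z_t=W_t$ still implies $\varphi(Z_t)=\varphi(W_t)$, one then compares with ${\rm CRW}(\T_d)$ via Duhamel's formula. The only test functions needed are $f_s(u,v)=R_s({\rm dist}_{\T_d}(u,v),0)$, the probability that ${\rm CRW}(\T_d)$ started at $(u,v)$ is on the diagonal at time $s$. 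The generator difference is nonnegative at the affected states because $i\mapsto R_s(i,0)$ is nonincreasing and the teleport lands at distance $\le1$. No stochastic domination over all monotone test functions is required, and by the example above none is available for the plain lift.
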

\begin{proof}[Proof of Theorem \ref{th:L2} (lower bound)]
	The claim in \eqref{eq:theorem-lb} readily follows from \eqref{eq:l2-norm}, Proposition \ref{prop:lb},	 and \eqref{eq:CRW-infinite-tree-lb}.	
\end{proof}

The remainder of this section is devoted to the proof of Proposition \ref{prop:lb}.
 
Fix an integer $d\ge 3$, and let $G=(V,E)$ be a  (non-random) $d$-regular graph, with a given $x\in V$ fixed all throughout.  Let us consider the cover tree of $G$ with root $o\in \T_d$ labeled as $x\in V$. This prescribes a unique covering map (i.e., a graph homomorphism) $\varphi:\T_d\to G$ with $\varphi(0)=x$.

We define a Markov process  $(Z_t,W_t)_{t\ge0}$ (shortened as ${\rm CRW}_*(\T_d)$) on $\T_d\times \T_d$
driven by the rate-$1$ clocks on edges $xy\in E$ as follows.
Whenever the clock of $xy$ rings at time $t>0$, the update coincides with the usual ${\rm CRW}(\T_d)$
update along lifts of $xy$, except when
\begin{equation}\label{eq:teleport}
	{\rm dist}_{\T_d}(Z_{t-},W_{t-})\ge2
	\qquad\text{and}\qquad
	\varphi(Z_{t-}),\varphi(W_{t-})\subset \{x,y\}\ ,
\end{equation}
in which we perform an additional ``teleport'' update (specified in \eqref{eq:teleport-2} below), whose outcome
has $\T_d$-distance in $\{0,1\}$ whenever possible. 
In other words, the scenario in \eqref{eq:teleport} corresponds to the two random walks being far in $\T_d$, but close  in $G$ (i.e., ${\rm dist}_G(\varphi(Z_{t^-}),\varphi(W_{t^-}))\le 1$).
   In scenario \eqref{eq:teleport}, the update results in (write $(Z_{t^-},W_{t^-})=(z,w)\in \T_d\times \T_d$)
	\begin{equation}\label{eq:teleport-2}
	(Z_t,W_t)=\begin{dcases}
		(z,z) &\text{with probability}\ 1/4\\
		\begin{cases}(w,w) &\text{if}\ \varphi(z)\neq\varphi(w)\\
			(w',w')&\text{if}\ \varphi(z)=\varphi(w)
		\end{cases} &\text{with probability}\ 1/4\\
		(z,z') &\text{with probability}\ 1/4\\
		\begin{dcases}(w,w') &\text{if}\ \varphi(z)\neq \varphi(w)\\
			(w',w) &\text{if}\ \varphi(z)=\varphi(w)
		\end{dcases} &\text{with probability}\ 1/4\ ,
	\end{dcases}
\end{equation} 
with $z',w' \in \T^d$ such that 
\begin{equation}
	zz', ww'\in E(\T_d)\ ,\qquad \varphi(z)\varphi(z')=\varphi(w)\varphi(w')=xy\in E\ .
\end{equation}
In particular, when either $\varphi(z)=\varphi(w)=x$ or $\varphi(z)=x, \varphi(w)=y$, the outcomes in \eqref{eq:teleport-2} project  via $\varphi:\T_d\to G$  to $	(x,x), (y,y), (x,y), (y,x)$, respectively.

By construction, provided that $Z_0=W_0=o$,  $(\varphi(Z_t),\varphi(W_t))_{t\ge 0}$ has the same law as ${\rm CRW}(G)$ started from $(x,x)$.
 Hence, we obtain
\begin{equation}\label{eq:l2-lower-bound-crw2}
	\P^{\scriptscriptstyle{\rm CRW}(G)}_{x,x}\left(X_t=Y_t\right)= \P^{\scriptscriptstyle{\rm CRW}_*(\T_d)}_{o,o}\left(\varphi(Z_t)=\varphi(W_t)\right)\ge \P^{\scriptscriptstyle{\rm CRW}_*(\T_d)}_{o,o}\left(Z_t=W_t\right)\ .
\end{equation}
Our aim is to show that the  probability on the right-hand side above is larger than or equal to the analogous one for  ${\rm CRW}(\T_d)$, in which the aforementioned  non-nearest-neighbor updates  are discarded.
\begin{lemma}\label{lem:lb}
Let $G=(V,E)$ be any $d$-regular graph, $d\ge3$. Then, 
	\begin{equation}\label{eq:l2-lower-bound-crw3}
		\P^{\scriptscriptstyle{\rm CRW}_*(\T_d)}_{o,o}\left(Z_t=W_t\right)\ge \P^{\scriptscriptstyle{\rm CRW}(\T_d)}_{o,o}\left(Z_t=W_t\right)\ ,\qquad t \ge 0\ .
	\end{equation}
\end{lemma}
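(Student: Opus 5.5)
Our plan is to project both processes onto the tree distance $D_t:={\rm dist}_{\T_d}(Z_t,W_t)$ and compare the resulting one-dimensional dynamics with the slow-bond chain of Definition \ref{def:BD-chain}. By symmetry of $\T_d$, under ${\rm CRW}(\T_d)$ the distance is exactly the slow-bond BD chain $R_t$, and the target probability is $R_t(0,0)$ by \eqref{eq:P-CRW-R}.

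Under ${\rm CRW}_*(\T_d)$ the distance is not Markov by itself, but it is a jump process on $\N_0$. Its jump rates conditional on the full state $(Z_{t^-},W_{t^-})$ are as follows.

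When $D=0$ or $D=1$, the teleport rule \eqref{eq:teleport} is never triggered, since it requires distance at least $2$. Hence the rates coincide with those of $R$: $d/2$ up from $0$; $d-1$ up and $1/2$ down from $1$.

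When $D\ge 2$, the usual ${\rm CRW}(\T_d)$ moves along lifts of edges incident to the walkers give rate $d-1$ up and $1$ down, as the walkers move independently. The teleport replaces some of these updates, namely those on the at most one edge $xy$ of $G$ with $\{\varphi(Z),\varphi(W)\}\subseteq\{x,y\}$. Such an update would otherwise move the distance to $D\pm1$, and instead lands it in $\{0,1\}\subset\{0,\dots,D-1\}$.

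So the key observation is that, in every state, the ${\rm CRW}_*$ distance process is a modification of the slow-bond chain in which some transitions $D\to D\pm1$ are redirected to a smaller value $\le D-1$. Every jump of $D$ under ${\rm CRW}_*$ is then stochastically dominated by the corresponding jump of $R$.

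The core step is a monotone coupling. We construct, on a common probability space, $(Z_t,W_t)$ following ${\rm CRW}_*(\T_d)$ and a copy $\tilde R$ of the slow-bond chain with $D_t\le \tilde R_t$ for all $t$. We use a grand coupling driven by a rate-$\Lambda$ Poisson clock with uniforms: at each ring, both processes draw their moves from a quantile coupling of their jump distributions, with jump measures ordered by their current values. Two facts make this work.

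First, $R$ is a monotone BD chain. Nearest-neighbour BD chains are attractive: two copies with $i\le j$ can be coupled to stay ordered, because they cannot cross without meeting.

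Second, at equal values the ${\rm CRW}_*$ jump law is dominated by that of $R$, as shown above.

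These two facts give $D_t\le_{st}\tilde R_t$ as processes. The main point needing care is the case $D_t<\tilde R_t$ when $D$ teleports. Since the teleport decreases $D$, ordering is preserved. The naive ${\rm CRW}$ moves of $D$ use the same rates as $R$ at a possibly different level, and we handle them by standard BD monotonicity.

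Domination alone, however, gives $\P(D_t=0)\ge\P(\tilde R_t=0)$ only if $\{0\}$ is a lower set, which it is. Hence
\[
\P^{\scriptscriptstyle{\rm CRW}_*(\T_d)}_{o,o}(Z_t=W_t)=\P(D_t=0)\ge\P(\tilde R_t=0)=R_t(0,0).
\]
Combined with \eqref{eq:P-CRW-R}, this yields \eqref{eq:l2-lower-bound-crw3}.

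The main obstacle is that $D$ is not autonomous under ${\rm CRW}_*$: its rates depend on the projected positions through \eqref{eq:teleport}. The comparison therefore must be made with a coupling whose rates are conditioned on the full state, rather than by a generator inequality on functions of $D$ alone. We intend to handle this by a generator-level comparison. Let $f$ be the nonincreasing function $i\mapsto R_{t-s}(i,0)$ on $\N_0$, which is nonincreasing by monotonicity of $R$. We then show that $s\mapsto \E[f_{t-s}(D_s)]$ is nondecreasing, using only that the ${\rm CRW}_*$ generator applied to $f(D)$ dominates $(\mathcal L_R f)(D)$ pointwise. This domination holds because redirected jumps move to smaller $i$, where $f$ is larger.

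Two points remain to check. The first is that $i\mapsto R_s(i,0)$ is nonincreasing, which follows from the attractive coupling of the BD chain. The second is that redirecting a $D\to D-1$ move to $D'\le D-1$ (not just a $D\to D+1$ move) still increases $f$, which holds by the same monotonicity.
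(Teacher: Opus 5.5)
Your proposal is correct, and it actually contains two proofs. The generator-level one at the end is precisely the paper's argument. There you show that $s\mapsto \E[f_{t-s}(D_s)]$ is nondecreasing, with $f_u(i)=R_u(i,0)$, using the pointwise bound $\mathcal L_S(f\circ D)\ge(\mathcal L_R f)\circ D$. The paper writes the same interpolation as a Duhamel formula with $f_s=Q_s\mathbf 1_\Delta=R_s(\mathrm{dist}_{\T_d}(\cdot,\cdot),0)$. It notes that $\mathcal L_S$ and $\mathcal L_Q$ differ only on $\mathcal B$, and concludes from the monotonicity of $i\mapsto R_s(i,0)$, which it cites from Ding--Lubetzky--Peres and you derive from attractiveness.

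Your monotone coupling is a genuinely different, pathwise route. It buys more: $D_t\le\tilde R_t$ almost surely, hence $\P(D_t\le j)\ge\sum_{i\le j}R_t(0,i)$ for every $j$, not only $j=0$. But it needs more care than you give it. The principle ``cannot cross without meeting'' only rules out crossing by non-simultaneous jumps. With a common rate-$\Lambda$ clock and a quantile coupling, you must take $\Lambda\ge d$. This makes the one-step kernels ordered at $D=\tilde R-1$ (the condition $p^R(D)+q^R(\tilde R)\le\Lambda$), so an up-jump of $D$ cannot coincide with a down-jump of $\tilde R$. For the lemma as stated, the generator route is shorter, so the coupling is not needed.

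One inaccuracy: it is not true that at most one edge of $G$ triggers the teleport. When $\varphi(Z)=\varphi(W)$ with $Z\neq W$, all $d$ edges at that vertex do; this is the multiplicity $m(z,w)$ in the paper. The slip is harmless. Each such edge's teleport replaces exactly the two rate-$\tfrac12$ moves along its lifts $zz'$, $ww'$ and sends the distance into $\{0,1\}$, which is the redirection your argument uses.

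This bookkeeping of yours is in fact more precise than the paper's display. The paper assigns distance $k$ to the fourth teleport outcome, which actually has distance $1$. It also does not subtract the ${\rm CRW}(\T_d)$ moves along $zz'$, $ww'$. The exact per-edge contribution is $\tfrac12 f(0)+\tfrac12 f(1)-\tfrac12 f(k_z)-\tfrac12 f(k_w)$, with $k_z,k_w\in\{k-1,k+1\}$ and $k\ge 2$. This is still nonnegative by monotonicity of $f$.
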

\begin{proof}[Proof of Proposition \ref{prop:lb}] The desired claim follows by combining  \eqref{eq:l2-lower-bound-crw2} and \eqref{eq:l2-lower-bound-crw3}.
\end{proof}
\begin{proof}[Proof of Lemma \ref{lem:lb}]
	Let $\Delta=\Delta_{\T_d}:=\{(z,z):z\in \T_d\}$. Let $S_t$ and $Q_t$ be the transition
	kernels of ${\rm CRW}_*(\T_d)$ and ${\rm CRW}(\T_d)$, with generators ${\mathcal L}_S$
	and ${\mathcal L}_Q$, respectively. Set $f_s:=Q_s{\bf 1}_\Delta$.
	By Duhamel's formula, for all $t\ge0$,
	\begin{equation}
		S_t((o,o),\Delta)-Q_t((o,o),\Delta)
		=\int_0^t \sum_{z,w\in \T_d} S_{t-s}((o,o),(z,w))\,({\mathcal L}_S-{\mathcal L}_Q)f_s(z,w)\,{\rm d}s\ .
	\end{equation}
	Moreover, ${\mathcal L}_S f_s(z,w)={\mathcal L}_Q f_s(z,w)$ unless
	\begin{equation}
		(z,w)\in{\mathcal B}:=\{(z,w):{\rm dist}_{\T_d}(z,w)\ge2,\ {\rm dist}_G(\varphi(z),\varphi(w))\le1\},
	\end{equation}
	so the sum may be restricted to ${\mathcal B}$.
	
	Fix $(z,w)\in{\mathcal B}$ and let $k:={\rm dist}_{\T_d}(z,w)\ge2$. Let
	\begin{equation}
		{\mathcal E}(z,w):=\{xy\in E:\{\varphi(z),\varphi(w)\}\subset\{x,y\}\}\ ,
		\qquad m(z,w):=|{\mathcal E}(z,w)|\ ,
	\end{equation}
	(counting edge multiplicity). For each $xy\in{\mathcal E}(z,w)$, when the clock of $xy$
	rings, ${\rm CRW}_*(\T_d)$ applies the teleport kernel at $(z,w)$, while ${\rm CRW}(\T_d)$
	does not have this update. Hence
	\begin{equation}
		({\mathcal L}_S-{\mathcal L}_Q)f_s(z,w)
		=\sum_{xy\in{\mathcal E}(z,w)} \Bigl({\bf E}[f_s(Z^+,W^+)\mid (Z^-,W^-)=(z,w),xy]-f_s(z,w)\Bigr) \ ,
	\end{equation}
	where $(Z^+,W^+)$ denotes the post-teleport state, cf.\ \eqref{eq:teleport-2}.
	By  invariance of ${\rm CRW}(\T_d)$,
	\begin{equation}
		f_s(u,v)=Q_s{\bf 1}_\Delta(u,v)=R_s({\rm dist}_{\T_d}(u,v),0), \qquad u,v\in \T_d\ ,
	\end{equation}
	with $R_s$ being the slow-bond BD kernel (Definition \ref{def:BD-chain}). In the teleport update at $(z,w)$, for each $xy\in{\mathcal E}(z,w)$,
	the four outcomes have, respectively, $\T_d$-distance  $0,0,1,k$ with weight $1/4$ each; therefore,
	\begin{equation}
		{\bf E}[f_s(Z^+,W^+)\mid (Z^-,W^-)=(z,w),xy]
		=\frac12 R_s(0,0)+\frac14 R_s(1,0)+\frac14 R_s(k,0)\ .
	\end{equation}
	Since $f_s(z,w)=R_s(k,0)$, we get
	\begin{equation}
		({\mathcal L}_S-{\mathcal L}_Q)f_s(z,w)
		=m(z,w)\left(\frac12\bigl(R_s(0,0)-R_s(k,0)\bigr)+\frac14\bigl(R_s(1,0)-R_s(k,0)\bigr)\right)\ge0\ ,
	\end{equation}
	because $\N_0\ni i\mapsto R_s(i,0)$ is decreasing (see, e.g., \cite[Lemma 4.1]{ding_lubetzky_peres_total_variation_2010}) and $k\ge2$.
	
	Thus, every integrand in the Duhamel formula is nonnegative, hence
	$S_t((o,o),\Delta)\ge Q_t((o,o),\Delta)$ for all $t\ge0$, which is the claim.
\end{proof}

\subsection{Upper bound} First, remark that Aldous-Lanoue's inequality in \eqref{eq:AL} in combination with  the quasi-Ramanujan property of random $d$-regular graphs (see, e.g., \cite{friedman_proof_2008,bordenave_new_2020}), that is, for all fixed $d\ge 3$,
\begin{equation}\label{eq:quasi-Ramanujan}
	\Prob\bigg({\rm gap}\le \frac{d}{2} (1-\varrho)-\delta\bigg)\xrightarrow{n\to \infty}0\ ,\qquad \delta>0\ ,
\end{equation}
yields the following non-sharp upper bound on the mixing time: for all $\eps, \delta >0$, w.h.p., 
\begin{equation}
		t_{{\rm mix},2}(\eps)\le \left(\frac{2}{d\left(1-\varrho\right)}+\delta\right)\log n \ .	
\end{equation}

We refine our estimate as follows.
	Fix $\varepsilon > 0$. Hence, for every $a \in \R$ and $b> 0$, we get
	\begin{align}
			&\Prob
		\left( \E_{\delta_1}\!\left[\left\|\frac{\eta_{T(a+b)}}{\pi}-1\right\|_{2}^2\right] >\varepsilon \right)\\
		&\qquad\le \Prob
		\left( \E_{\delta_1}\!\left[\left\|\frac{\eta_{T(a)}}{\pi}-1\right\|_{2}^2\right] > e^{{\rm gap}\,{b \log \log n}}\varepsilon \right)\\
		&\qquad\le 	\Prob\left(\E_{\delta_1}\!\left[\left\|\frac{\eta_{T(a)}}{\pi}-1\right\|_{2}^2\right]>
		e^{\frac{d(1-\varrho)\,b \log \log n}{4}}\varepsilon
		\right) + \Prob\left({\rm gap}\le \frac{d}4(1-\varrho)\right)\\
		&\qquad =
			\Prob\left(\E_{\delta_1}\!\left[\left\|\frac{\eta_{T(a)}}{\pi}-1\right\|_{2}^2\right]>
		e^{\frac{d(1-\varrho)\,b \log \log n}{4}}\varepsilon
		\right) + o(1)\ ,
	\end{align}
where for the first step we used Aldous-Lanoue's inequality in \eqref{eq:AL}, whereas the last step used \eqref{eq:quasi-Ramanujan}. 
Hence, by introducing the following event 
\begin{equation}
	\cW_t:=\left\{ \text{the orbits}\  (X_s,Y_s)_{s\le t}\ \text{may be embedded in a $d$-regular tree} \right\}\ ,
\end{equation}
we have, $\Prob$-a.s., 
\begin{align}\label{eq:L2-xxx}
	\begin{aligned}
	\P^{\scriptscriptstyle{\rm CRW}(G)}_{1,1}(X_t=Y_t) &\le \P^{\scriptscriptstyle{\rm CRW}(G)}_{1,1}(X_t=Y_t\,, \cW_t ) + \P^{\scriptscriptstyle{\rm CRW}(G)}_{1,1}(\cW_t^c)\\
	&\le \P^{\scriptscriptstyle{\rm CRW}(\mathds T_d)}_{o,o}(X_t=Y_t) + \P^{\scriptscriptstyle{\rm CRW}(G)}_{1,1}(\cW_t^c)\ ,
	\end{aligned}
\end{align}
where we recall that $\mathds T_d$ denotes the $d$-regular infinite tree and $o$ is an arbitrary vertex, which we will refer to as the \emph{root}.
By combining \eqref{eq:l2-norm} and \eqref{eq:L2-xxx}, Markov inequality yields
\begin{align}
	&\Prob\left(\E_{\delta_1}\!\left[\left\|\frac{\eta_{T(a)}}{\pi}-1\right\|_{2}^2\right]>
	e^{\frac{d(1-\varrho)\,b \log \log n}{4}}\varepsilon
	\right) \\
	&\qquad \le \frac1\eps\left\{
	\frac{n\,\P^{\scriptscriptstyle{\rm CRW}(\mathds T_d)}_{o,o}(X_{T(a)}=Y_{T(a)})-1}{\left(\log n\right)^{d(1-\varrho)\, b/4}} + \frac{n\,\Ex\left[\P^{\scriptscriptstyle{\rm CRW}(G)}_{1,1}({\cW_{T(a)}^c})\right]}{\left(\log n\right)^{d(1-\varrho)\, b/4}}\right\}
	\ .
\label{eq:markov-l2}
\end{align}

\begin{proof}[Proof of Theorem \ref{th:L2} (upper bound)]
	We prove  \eqref{eq:theorem-ub}  as soon as we show that, for some fixed $b>0$ and all $\eps>0$, the expression in curly brackets above vanishes by first taking $n\to \infty$, and then $a\to \infty$. This is indeed the case: the first term in \eqref{eq:markov-l2} vanishes for all $b>0$ by 
	\eqref{eq:CRW-infinite-tree-ub}; the second expression vanishes for $b>16/d(1-\varrho)$ in view of Lemma \ref{lemma:annealing} below.
\end{proof}

\begin{lemma}\label{lemma:annealing}
	Fix $d\ge 3$. Then, for all $n$ large enough and $t\le \frac{1}{2d}\log^2 n$, we have
	\begin{equation}
		\Ex\big[\P^{\scriptscriptstyle{\rm CRW}(G)}_{1,1}(\cW_{t}^c)\big]\le \frac{C\log^4 n}{n}\ ,
	\end{equation}
	for some $C=C(d)>0$.
\end{lemma}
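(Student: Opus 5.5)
We will prove the bound by an annealed exploration of the configuration model, revealing the graph only along the trajectories of the two walkers. Since $G$ is uniform on $\cG(n,d)$, i.e.\ the configuration model conditioned on simplicity, and the simplicity event has probability bounded away from zero (about $e^{-(d^2-1)/4}$), it suffices to prove the bound for the configuration multigraph. Any constant factor is absorbed into $C$.

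\emph{Exploration.} We run ${\rm CRW}(G)$ from $(1,1)$ up to time $t$, jointly with the pairing of half-edges. Whenever a clock rings on an edge incident to the current position of a walker, we reveal the matching of the corresponding half-edge if it has not yet been revealed. The only edges that matter are those incident to $X_s$ or $Y_s$. Since the walkers move at rate at most $d$, the union of the two orbits visits a set of vertices whose size is stochastically dominated by $1+$ a Poisson variable with mean $2dt$. The revealed structure is therefore contained in a subgraph with $N_t$ revealed edges, where $\E[N_t]\lesssim d\cdot dt$.

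\emph{Reducing to a tree-like explored region.} The orbits embed in $\T_d$ unless the explored subgraph contains a cycle. Such a cycle arises only at a step where a newly revealed half-edge is paired with a half-edge of an already explored vertex. Conditionally on the past, each new pairing creates such a collision with probability at most $dN/(dn-2N)\le 2N/n$, where $N$ is the current number of explored vertices. The case of collisions arising from the unmatched half-edges of explored vertices is covered by the same bound.

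\emph{Counting collisions.} Summing over the at most $N_t$ pairings gives
\begin{align}
\Ex\big[\P^{\scriptscriptstyle{\rm CRW}(G)}_{1,1}(\cW_t^c)\big]\le \Ex\Big[\frac{2N_t^2}{n}\Big]\ .
\end{align}
We have $N_t\le d\,(1+M_t)$, where $M_t$ is the total number of jumps of both walkers, and $M_t$ is Poisson with mean $dt$, or is dominated by such a variable. Hence $\Ex[N_t^2]\lesssim d^2(dt)^2\le C\log^4 n$ for $t\le \frac{1}{2d}\log^2 n$, which gives the claim $C\log^4 n/n$ for $n$ large. An alternative, which avoids conditioning subtleties, is to first fix the trajectory's number of jumps $M$ and bound by a union bound over pairs of explored half-edges.

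\emph{Main obstacle.} The delicate step is making the collision count rigorous. The walkers' moves depend on revealed clocks, and the coupling interaction when the walkers are adjacent must not bias the pairing. We handle this by revealing the pairings lazily, since the clocks are independent of the pairing. This yields a filtration in which each newly revealed half-edge is matched uniformly among the remaining ones, independently of the walk's history, and we then apply the union bound above. A second point is to ensure that $\cW_t$ only requires the orbits, and not the full ball, to be tree-like; with that reading the bound above suffices.
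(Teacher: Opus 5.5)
Your route is the paper's. You reduce to the configuration model at the cost of a factor $1/\bar\Prob(\mathscr S)$, explore the graph along the two trajectories, and union-bound over $O(\log^2 n)$ pairings, each of which hits an already explored vertex with probability $O(\log^2 n/n)$. You sum conditional collision probabilities and control $\Ex[N_t^2]$, whereas the paper truncates the number of jumps at $A=2\log^2 n$ and adds a Poisson tail bound; that variant is fine. Two small points about it:
\begin{itemize}
\item Since you do not truncate, state the per-pairing bound $cN/n$ for all $N$. It holds trivially once $N\ge n/c$.
\item $M_t$ is not dominated by ${\rm Poi}(dt)$, because coinciding walkers can jump simultaneously. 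However, $M_t\le 2\,{\rm Poi}(2dt)$, which still gives $\Ex[M_t^2]=O(\log^4 n)$ for $t\le \frac{1}{2d}\log^2 n$.
\end{itemize}

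The one thing to change is your closing remark. Do not adopt the reading in which only the traversed edges must form a tree, even if the verbal definition of $\cW_t$ seems to allow it. The lemma feeds into \eqref{eq:L2-xxx}, which needs that on $\cW_t$ the ${\rm CRW}(G)$ trajectory is the projection of a ${\rm CRW}(\T_d)$ trajectory with the same law. Suppose the walkers sit at $u\neq v$ joined by an edge of $G$ that was never traversed.
\begin{itemize}
\item On $G$ they share that edge's clock: the exit rate from $(u,v)$ is $d-\tfrac14$, and simultaneous moves are possible.
\item Their lifts in $\T_d$ are at distance at least $2$, so the exit rate there is $d$.
\end{itemize}
The path densities therefore differ, and tree-likeness of the orbits alone does not yield \eqref{eq:L2-xxx}.

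The same shared clock is why your lazy revealing needs extra care: a single clock sits on two unrevealed half-edges at occupied vertices, so per-half-edge clocks misstate its rate. Both issues disappear if you do as in step (2) of the paper. Match all $d$ stubs of a vertex at its first visit, and bound the probability that this explored graph is not a tree. Your count $N_t\le d(1+M_t)$ already accounts for all these stubs, so the rest of your computation goes through unchanged.
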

\begin{proof}
	Let $\bar G$ be the $d$-regular configuration model on $|V|=n$ vertices, with $\bar \Prob$ and $\bar \Ex$ denoting law and expectation, and let
	$\mathscr S:=\{\bar G \text{ is simple}\}$. It is well-known that $\bar \Prob(\mathscr S)\ge \exp(-d^2)$, for all $n$ large enough (see, e.g., \cite[Eq.\ (2.1)]{lubetzky2010cutoff}). Moreover,  $G \overset{{\rm d}}= \bar G \mid \mathscr S$, and
	\begin{equation}
	\Ex\big[\P^{\scriptscriptstyle{\rm CRW}(G)}_{1,1}(\cW_t^c)\big]
	= \bar{\Ex}\big[\P^{\scriptscriptstyle{\rm CRW}(\bar G)}_{1,1}(\cW_t^c)\mid \mathscr S\big]	\le {\bar\Prob^{\scriptscriptstyle\rm ACRW}_{1,1}(\cW^c_t)}\,/\,{\bar \Prob(\mathscr S)}\ ,
	\end{equation}
	where, on the right-hand side,  the numerator is an \textit{annealed} probability. More generally, for any $x,y\in V$, the law $\bar\Prob_{x,y}^{\scriptscriptstyle\rm ACRW}$ is  defined through the following randomized algorithm:
	\begin{itemize}
		\item[(1)] Start with $|V|=n$ vertices, each equipped with $d\ge 3$ unmatched stubs, and two (labeled) particles whose positions are initially set as $X_0=x$ and $Y_0=y$.
		\item[(2)] At any time $s\ge 0$ in which any of the two particles sits on a vertex that has not yet matched all of its stubs, sequentially match its unmatched ones with others, uniformly at random. Let $\bar G_s$ denote the resulting graph, whose vertices are those that have at least one explored edge.
		\item[(3)] Let the two particles evolve as a ${\rm CRW}$-process  on the explored graph,  let $X_s, Y_s\in V$ denote their positions at time $s\ge 0$ and, whenever a particle jump occurs and a particle lands on a vertex with some unmatched stub, proceed as in step (2) above.
	\end{itemize}
This algorithm produces as an output a triple $(X_s,Y_s,\bar G_s)_{s\in[0,t]}$,  and the event $\cW_t^c$ implies that $\bar G_t$ is not a tree. In order to obtain some $\bar G_t$ that is not a tree, there must exist an $s\le t$ such that a new edge is created with a vertex  in $\bar G_{s^-}$. We refer to this event as a {collision}. 

Suppose that the cumulative number of jumps within time $t$ is smaller than some $A=A_n=o(n)$. Under this event, the probability that a given new matching results in a collision is bounded by $(d-1)A/(dn-2dA)$. Hence, 
\begin{equation}\label{eq:annealing}
\bar	\Prob_{1,1}^{\scriptscriptstyle\rm ACRW}(\cW^c_t)\le \frac{d A^2}{dn-2dA}+	\Prob({\rm Poi}(2dt)>A)\ ,
\end{equation}
By taking  $A>\log^2 n> 2dt$ and using the bound
\begin{equation}
\bar	\Prob({\rm Poi}(2dt)>A)\le \exp\left(-\frac{(A-2dt)^2}{2A} \right)\ ,
\end{equation}
we deduce that if, e.g., $A=2\log^2n$, the second probability  on the right-hand side in \eqref{eq:annealing} is $o(n^{-1})$. In conclusion, for all $n$ large enough, we have
\begin{equation}
\bar	\Prob_{1,1}^{\scriptscriptstyle\rm ACRW}(\cW^c_t)\le \frac{5 \log^4(n)}{n}\ ,
\end{equation}
from which the desired result follows.
\end{proof}

\section{Proof of Proposition \ref{prop:CRW-infinite-tree-CT}}\label{sec:CRW-infinite-tree}
Recall Definition \ref{def:BD-chain}, which introduces $R_t$,  the continuous-time slow-bond BD chain.
Consider the corresponding discrete-time version obtained from $R_t$ by taking $1/d$ as a time unit. Let $c_k$, $k\in \N_0$, be the probability of the event ``the discrete-time random walk started from $0$ is in $0$ at the $k$th-step'' (cf.\ \eqref{eq:ck-H} below). Clearly, $c_0=1$.

 Our main goal of the section is to prove the following asymptotics for $c_k$. Remark that, all throughout the section and unless stated otherwise, the asymptotic notation  is taken in the limit $k\to \infty$.
\begin{proposition}\label{prop:CRW-infinite-tree}
	Fix $d\ge 3$, and recall the definitions of $\varrho=\varrho_d$, $\sigma=\sigma_d$, and $\gamma =\gamma_d$ from \eqref{eq:gamma2}--\eqref{eq:gamma}.
	Then,  there exists $\beta=\beta_d\in \R$ (explicitly given in \eqref{eq:beta} below) satisfying
	\begin{equation}\label{eq:final-formula-ck}
		c_k \asymp k^\beta \gamma^k\ ,\qquad \text{as}\ k\to \infty\ .
	\end{equation}
\end{proposition}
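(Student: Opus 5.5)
The plan is to use generating functions and singularity analysis. First I record the one-step law of the discrete-time chain, taking $1/d$ as the time unit:
\begin{itemize}
\item from $0$ it moves up with probability $1/2$ and holds with probability $1/2$;
\item from $1$ it moves up with probability $p:=(d-1)/d$, down with probability $1/(2d)$, and holds with probability $1/(2d)$;
\item from $i\ge2$ it moves up with probability $p$ and down with probability $q:=1/d$.
\end{itemize}
Let $C(z):=\sum_{k\ge0}c_kz^k$. The renewal decomposition at returns to $0$ gives $C(z)=1/(1-F(z))$, where $F$ is the first-return generating function.

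To compute $F$, let $H(z)$ be the generating function of the first passage time from $2$ to $1$ for the homogeneous walk with rates $(p,q)$. It has the standard closed form
\begin{equation}
H(z)=\frac{1-\sqrt{1-4pqz^2}}{2pz}.
\end{equation}
Next, a first-step analysis at $1$ gives the generating function of the hitting time of $0$ from $1$:
\begin{equation}
G(z)=\frac{z/(2d)}{1-\frac{z}{2d}-zpH(z)}.
\end{equation}
Finally, $F(z)=\frac z2+\frac z2G(z)$. All three are explicit algebraic functions, so $C$ is algebraic.

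The singularities of $C$ come from two sources. The first is the branch points of $H$ at $z=\pm 1/(2\sqrt{pq})=\pm d/(2\sqrt{d-1})=\pm1/\varrho$. The second is the zeros of $1-F(z)$, which are also the places where $G$ has poles. By positivity of coefficients, $F$ is increasing on $(0,1/\varrho]$, with $F(0)=0$. Hence $1-F$ has a zero $z_0$ in $(0,1/\varrho)$ if and only if $F(1/\varrho)>1$. The key computation is then to evaluate $F(1/\varrho)$ in closed form, using $H(1/\varrho)=\sqrt{q/p}$, and to check that $F(1/\varrho)>1$ exactly when $d>10$. In that case I solve $F(z_0)=1$. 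Rationalizing the square root should give $1/z_0=\sigma$, and Lemma \ref{lemma:rd} ($\sigma>\varrho$) is consistent with this. A simple pole at $z_0$ yields $c_k\sim C_0\,\sigma^k$, that is, $\beta=0$.

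When $d\le 10$ the dominant singularities are the square-root branch points $\pm1/\varrho$. Expanding near $1/\varrho$ should give $1-F(z)=a-b\sqrt{1-\varrho z}+\dots$. Two cases follow.
\begin{itemize}
\item For $d<10$ we have $a>0$, so $C$ has a square-root singularity and $c_k\asymp k^{-3/2}\varrho^k$.
\item At $d=10$ we have $a=0$, so $C\sim b^{-1}(1-\varrho z)^{-1/2}$ and $c_k\asymp k^{-1/2}\varrho^k$.
\end{itemize}
This fixes $\beta\in\{0,-1/2,-3/2\}$. To make these statements rigorous I apply the Flajolet--Odlyzko transfer theorems on a $\Delta$-domain. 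This needs the continuation of $C$ past the circle $|z|=1/\varrho$ away from $\pm1/\varrho$, and the fact that $1-F$ has no other zeros on $|z|\le 1/\gamma$. For $z$ on the circle other than the positive real point, aperiodicity (the holding probabilities at $0$ and $1$) together with the strict triangle inequality gives $|F(z)|<F(|z|)\le1$.

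The main obstacle is the symmetric branch point $-1/\varrho$. This singularity is there because the chain is bipartite away from $\{0,1\}$. Near $-1/\varrho$ the same kind of expansion produces a contribution of the form $(-1)^k k^{-3/2}\varrho^k$ (or $(-1)^kk^{-1/2}\varrho^k$). To get $\asymp$ rather than just $O$, I must show that it does not cancel the contribution from $+1/\varrho$. Concretely, I will show that the singular coefficient at $-1/\varrho$ is strictly smaller in modulus than the one at $+1/\varrho$. This should follow because $|1-F(-1/\varrho)|>|1-F(1/\varrho)|$: the holding steps carry sign $-1$ at $z<0$, which damps the return function. If that comparison proves awkward, I will fall back on a crude positivity bound for the odd subsequence, $c_{k+1}\ge \frac12 c_k$ (hold at $0$), and obtain its upper bound separately. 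The remaining work, the algebra for $\sigma$ and for the threshold $d=10$, is routine.
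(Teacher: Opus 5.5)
Your argument is correct. Its skeleton is the paper's: the renewal identity $C=1/(1-F)$ (the paper's $G_H=1/(1-U_H)$), the explicit first-passage functions, and Flajolet--Odlyzko transfer with the split $d<10$, $d=10$, $d>10$.

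\textbf{Where you differ.} You locate the dominant singularity differently, so your argument replaces Lemma~\ref{lemma:rd}. The paper rationalizes $g$ and factors the denominator as $2p(1-z)(z_1-z)(z_2-z)$. It then checks by hand which of $1,z_1,z_2,\pm\varrho^{-1}$ are removable, and how $|z_2|$ compares with $\varrho^{-1}$: the roots are complex for $d\le 8$, lie on the critical circle at $d=8$, and form a double root at $d=9$.

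You use positivity instead. By Pringsheim and monotonicity, everything is decided by the single value $F(\varrho^{-1})=\frac{d}{2(2\sqrt{d-1}-1)}$. Comparing it with $1$ reduces to the sign of $(d-2)(d-10)$. Aperiodicity ($f_1,f_2>0$) then removes every other point of the critical circle. So the spurious roots $z_1,z_2$ never need examining, and $\sigma>\varrho$ for $d>10$ comes for free.

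This makes the threshold transparent. The tilted first-return law $(f_k\gamma^{-k})_k$ is defective for $d<10$, proper with infinite mean at $d=10$, and proper with finite mean for $d>10$. That is exactly the trichotomy $\beta=-\tfrac32,-\tfrac12,0$; for $d>10$ it is the renewal theorem with constant $1/(z_0F'(z_0))$.

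The paper's route gives in exchange the closed form of $g$ and explicit constants ($f_\pm$, $\mathfrak h_d$), which are used in Remark~\ref{rem:large}. In your route you still have to check that $z_0=z_2$. The rationalized equation $F=1$ reduces to $(z-1)(pz^2-3pz+2)=0$, and for $d\ge 11$ both $1$ and $z_1$ solve it only with the other branch of $\sqrt{1-4pqz^2}$.

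\textbf{Points to tighten.}
\begin{enumerate}
\item Your remark that zeros of $1-F$ are where $G$ has poles is not right: a pole of $G$ is a pole of $F$, where $C$ vanishes. What you actually use is that $G$ has no pole in $|z|\le\varrho^{-1}$. This holds because its denominator $\tfrac12\bigl(1+\sqrt{1-4pqz^2}-z/d\bigr)$ does not vanish there; equivalently, $G$ has nonnegative coefficients and $G(\varrho^{-1})=1/(2\sqrt{d-1}-1)<\infty$.
\item The inequality $|1-F(-\varrho^{-1})|>1-F(\varrho^{-1})$ is immediate from $|F(-\varrho^{-1})|<F(\varrho^{-1})$, but it does not by itself compare the singular coefficients of $C$. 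Write $F(z)=F(\pm\varrho^{-1})-\phi_\pm\sqrt{1\mp\varrho z}+O(1\mp\varrho z)$. The corresponding coefficient of $C$ is $\phi_\pm/(1-F(\pm\varrho^{-1}))^2$, so you also need $|\phi_-|\le\phi_+$.
\item That inequality is true, for either of two reasons. One is the explicit value $\phi_\pm=\sqrt2\,\varrho^{-2}/\bigl(2d\,(1\mp\tfrac{1}{2\sqrt{d-1}})^2\bigr)$. The other is $f_k\ge0$ together with $f_k\sim\frac{\phi_++(-1)^k\phi_-}{2\sqrt\pi}\,k^{-3/2}\varrho^k$.
\item Your fallback $c_{k+1}\ge\frac12 c_k$ also closes the argument, and needs only $\phi_+>0$. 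Note the sign: in your convention $b=-\phi_+<0$, so at $d=10$ one has $C\sim\phi_+^{-1}(1-\varrho z)^{-1/2}$.
\item At $d=10$ the point $-\varrho^{-1}$ can only contribute $O(k^{-3/2}\varrho^k)$, since $1-F(-\varrho^{-1})>0$. So no $(-1)^k k^{-1/2}\varrho^k$ term arises, which agrees with the paper.
\end{enumerate}
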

We prove this result in Section \ref{sec:CRW-infinite-tree-proof} below.
In Section \ref{suse:discr-to-cont}, we translate the result in Proposition \ref{prop:CRW-infinite-tree} into a bound for the corresponding continuous-time process, and conclude therein the proof of Proposition \ref{prop:CRW-infinite-tree-CT}. 
\subsection{Preliminaries}\label{sec:BD-prel}We start by recalling some general facts.

Fix $p\in (1/2,1]$,  $p_0\in [0,1]$ as well as $q:=1-p$ and $q_1 \in [0,1-p]$, and consider a discrete-time BD chain on $\N_0$ with the following one-step transition probabilities:
\begin{equation}
	H(i,i+1)= p\ ,\qquad H(i+1,i) = q\ ,\qquad \text{for all}\  i \ge 1\ ,
\end{equation}
\begin{equation}
	H(0,1)=1-H(0,0)=p_0\ ,\qquad H(1,0)=q_1\ ,\qquad H(1,1)= 1-q_1-p
\end{equation}
For all $k \in \N_0$, let $(H^k(i,j))_{i,j \in \N_0}$ denote the $k$-step transition probabilities for this Markov chain.  
We extract precise asymptotics,  as $k\to \infty$, for $H^k(0,0)$  from the associated generating function $G_H(0,0|z)$, which is given by
\begin{equation}
	G_H(0,0|z):= \sum_{k\in \N_0} H^k(0,0)\, z^k\ ,\qquad z\in \mathbb C\ ,\ |z|\le\mathfrak r_H\ ,
\end{equation}
where $\mathfrak{r}_H$ is  the  radius of convergence of the series  $G_H(0,0|z)$ (we have $\mathfrak{r}_H\ge 1$).
We know that (see, e.g., \cite[Theorem 1.38]{woess_denumerable_2009}) 
\begin{equation}\label{eq:GH}
	G_H(0,0|z)=\frac{1}{1-U_H(0,0|z)}\ ,\qquad |z|<\mathfrak r_H\ ,
\end{equation}
where:
\begin{enumerate}
	\item $U_H(0,0|z)$ is given 	 as
	\begin{align}\label{eq:UH}
		U_H(0,0|z)&=z\, H(0,0) + z\, H(0,1)\, F_H(1,0|z)\\
		&= z\left(1-p_0\right)+zp_0\, F_H(1,0|z)\ ,
	\end{align}
	where, for all $|z|<\mathfrak{r}_H$,
	\begin{equation}\label{eq:FH}
		F_H(i,j|z):=\sum_{k\in \N_0}\P^H_i(\tau_j=k)\, z^k\ ,
	\end{equation}
	and $\tau_j$ is the first hitting time of $j\in \N_0$;	
	\item  $F_H(1,0|z)$ is given in terms of $F_H(2,1|z)$ (cf.\ \cite[Eq.\ (5.12)]{woess_denumerable_2009}):
	\begin{align}
		F_H(1,0|z)&=\frac{H(1,0)\,z}{1-H(1,1)\, z - H(1,2)\, z\, F_H(2,1|z)}\\
		&= \frac{q_1 z}{1-\left(1-p-q_1\right)z-pz\, F_H(2,1|z)}\  ,
	\end{align}
with  (see, e.g., \cite[Example 5.24]{woess_denumerable_2009})
\begin{equation}
F_H(2,1|z)=\frac{1}{2pz}\left(1-\sqrt{1-4 p q z^2}\right)\ ,
\end{equation}
which does not depend on $p_0, q_1$, but only on $p=1-q$.
\end{enumerate}

	\subsection{Proof of Proposition \ref{prop:CRW-infinite-tree}}\label{sec:CRW-infinite-tree-proof} Fix an integer $d\ge3$, and 
	set
	\begin{equation}\label{eq:p-q-p0-q1}
		p=1-q=\frac{d-1}{d}\in \bigg[\frac23\,,1\bigg)\ ,\qquad p_0=\frac12\ ,\qquad q_1=\frac{q}2\ .
	\end{equation}
	With this choice, the discrete-time chain $H^k$ from Section \ref{sec:BD-prel} coincides with the one introduced at the beginning of Section \ref{sec:CRW-infinite-tree}. In particular, we have
	\begin{equation}\label{eq:ck-H}
		c_k = H^k(0,0)\ ,\qquad k \in \N_0\ .
	\end{equation}	
	By specializing the general formulas from Section \ref{sec:BD-prel} to the setting in \eqref{eq:p-q-p0-q1}, we obtain that the Laurent series
	\begin{equation}\label{eq:laurent}
		\sum_{k\in \N_0}c_k	\, z^k\ ,\qquad z \in \mathds C\ ,
	\end{equation}
	reads, for $z\in \mathds C$ in a suitable neighborhood of the origin, as
	\begin{align}
	\label{eq:Nz}\begin{aligned}
		g(z):= G(0,0|z)
			%&=\frac{4-3\left(2p+1\right)z+4 p z^2+z  \sqrt{1-4pqz^2}}{\left(1-z\right)\left(4-2pz\left(3-z\right)\right)}\\
			&=\frac{ 4-3\left(2p+1\right)z+4 p z^2+z  \sqrt{1-4pqz^2}}{2p\left(1-z\right)\left(z_1-z\right)\left(z_2-z\right)}\ ,
		\end{aligned}
	\end{align}
	where
	\begin{equation}\label{eq:z1-z2}
		z_1= \frac{3}{2}-\sqrt{\frac{{9p-8}}{ 4p}}\ ,\qquad z_2=\frac{3}{2}+\sqrt{\frac{{9p-8}}{4p}}\ .
	\end{equation}
	Note that $z_1, z_2$ are complex conjugate if and only if $p<8/9$ (i.e., $d<9$). Moreover (cf.\ \eqref{eq:gamma} and \eqref{eq:z1-z2}), 
	\begin{equation}\label{eq:sigma-z2}
		z_2=\sigma^{-1}\ ,\qquad d \ge 10\ .
	\end{equation}

	In what follows, we determine  asymptotics of the coefficients $(c_k)_{k\in \N_0}$ of the  series in \eqref{eq:laurent} via a singularity analysis of 
  the function $g(z)$ in \eqref{eq:Nz}.  
	Start by noting that:
	\begin{enumerate}
		\item the numerator in \eqref{eq:Nz} is holomorphic on $|z|<1/\varrho$, where \begin{equation}\label{eq:rho-p-q}
			\varrho=2\sqrt{pq}=\frac{2\sqrt{d-1}}d\ ,
			\end{equation} being sum of holomorphic functions on that same open disk;
		\item 		
		there are five ``potential singularities'' at	
		\begin{equation}
			1\ ,\ z_1\ ,\ z_2\ ,\ \varrho^{-1}\ ,\ -\varrho^{-1}\ .
		\end{equation}
	\end{enumerate}   We now characterize the radius of convergence of the series in \eqref{eq:laurent}.
	\begin{figure}
		\includegraphics[width=7cm]{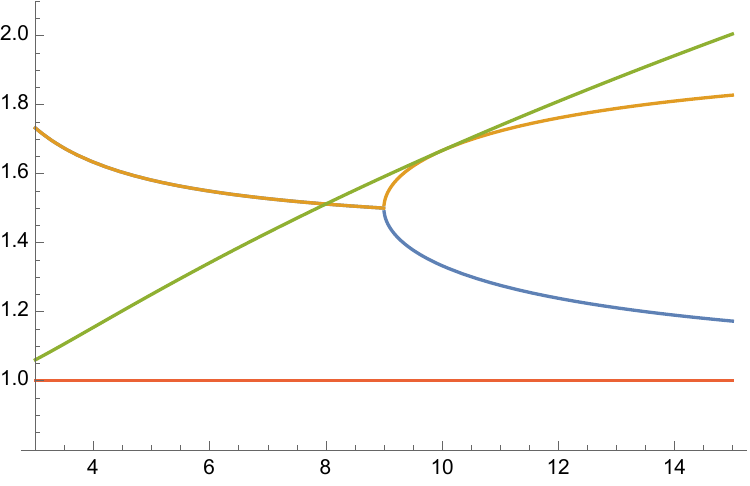}
		\caption{The horizontal axis corresponds to the variable $d\ge 3$. The function in red is constantly equal to $1$. The orange curve is the function $d\mapsto |z_2|$,  while the blue one is $d\mapsto |z_1|$ (note that the two curves coincide for $d\le 9$). In green,  the curve $\varrho^{-1}=d/(2\sqrt{d-1})$.}\label{fig:pot-sing}
	\end{figure}

	\begin{lemma}\label{lemma:rd}
		Let $r=r_d\ge 0$ be the distance from the origin of the nearest singularity of the function $g(z)$ (see \eqref{eq:Nz}) when $p=\frac{d-1}{d}$. Then, 	
		\begin{equation}
		r=\begin{cases}
				\varrho^{-1}&\text{if}\ d\le 10\\
				z_2&\text{if}\ d\ge 10\ .
			\end{cases}
		\end{equation}
	\end{lemma}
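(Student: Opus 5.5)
The plan is to locate the singularities of $g$ in \eqref{eq:Nz} by checking, for each of the five candidates $1, z_1, z_2, \pm\varrho^{-1}$, whether it is a genuine singularity or is cancelled by a zero of the numerator. Write the numerator as
\begin{equation}
N(z)=A(z)+z\sqrt{D(z)}\ ,\qquad A(z):=4-3(2p+1)z+4pz^2\ ,\qquad D(z):=1-4pqz^2\ ,
\end{equation}
with the principal branch of the square root on $|z|<\varrho^{-1}$. The algebraic tool is the conjugate product
\begin{equation}
N(z)\bigl(A(z)-z\sqrt{D(z)}\bigr)=A(z)^2-z^2D(z)\ ,
\end{equation}
a polynomial of degree $4$. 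I expect it to factor as a constant times $(1-z)\,(z^2-3z+2/p)$ times a linear factor; this is consistent with the denominator $2p(1-z)(z_1-z)(z_2-z)$. I will verify the factorization by direct expansion, using $p+q=1$. Consequently, each of $1,z_1,z_2$ is a zero of exactly one of the two conjugates $A\pm z\sqrt D$. For real $z_0$ in the disk, it is a zero of $N$ precisely when $A(z_0)\le 0$, i.e.\ when $A(z_0)=-z_0\sqrt{D(z_0)}$ with the positive root.

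\emph{Step 1 ($z=1$).} Since $p>1/2$, we have $\sqrt{1-4pq}=2p-1$, so $N(1)=1-2p+(2p-1)=0$. Hence $z=1$ is removable, which is consistent with transience: $G$ is finite at $1$.

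\emph{Step 2 (the complex regime $d\le 8$).} Here $z_1,z_2$ are complex conjugates with $|z_1|^2=|z_2|^2=z_1z_2=2/p$. Comparing with $\varrho^{-2}=1/(4pq)$ gives $2/p\ge 1/(4pq)$ if and only if $q\ge 1/8$, i.e.\ $d\le 8$, with equality at $d=8$. So the poles lie on or outside the circle $|z|=\varrho^{-1}$, and the nearest singularity is at modulus $\varrho^{-1}$. To confirm that $\pm\varrho^{-1}$ are genuine branch points, I will check that the coefficient $\pm\varrho^{-1}$ multiplying $\sqrt{D}$ in $N$ does not vanish, and that the rest of $g$ is analytic and nonzero there. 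The points $\pm\varrho^{-1}$ have the same modulus, so either one gives $r=\varrho^{-1}$.

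\emph{Step 3 (the real regime $d\ge 9$).} Now $z_1\le 3/2\le z_2$ are real. I will show $A(z_1)<0$, or at least $\le 0$, so that $z_1$ is a zero of $N$ and hence removable. At $d=9$ we get $z_1=z_2=3/2<\varrho^{-1}=9/(2\sqrt8)$, so cancellation there is forced by the double root and needs a separate check of multiplicities. For $z_2$ I will show $A(z_2)>0$, so $z_2$ is a true simple pole. The comparison of $z_2$ with $\varrho^{-1}$ is then elementary. At $d=10$ the two coincide: $z_2=3/2+1/6=5/3=\varrho^{-1}$. For $d>10$ we have $z_2<\varrho^{-1}$, which also gives $\sigma=z_2^{-1}>\varrho$. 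For $d=9$ the inequality is reversed. Outside the real axis there are no other candidates in the disk, since all five candidates are real or have been handled in Step 2.

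I expect the main obstacle to be the sign determination in Step 3. Deciding the branch of $\sqrt{D(z_i)}$, i.e.\ proving $A(z_1)\le 0<A(z_2)$ uniformly in $d\ge 9$, reduces to inequalities in the single variable $p\in[8/9,1)$, and these are messy because of the nested radical $\sqrt{(9p-8)/(4p)}$. I would first try to use the relation $z_i^2=3z_i-2/p$ to reduce $A(z_i)$ to an affine function of $z_i$, and then compare it to $\mp z_i\sqrt{D(z_i)}$ by squaring; the polynomial identity already guarantees equality in absolute value, so only the sign needs to be determined. Borderline degenerate cases, such as the double root at $d=9$ and the coincidence $z_2=\varrho^{-1}$ at $d=10$, will be checked by hand.
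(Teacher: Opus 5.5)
Your proposal is correct, but at the decisive step it takes a different route from the paper.

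\textbf{How the paper argues.} The paper handles $z=1$ as you do. It states the comparison of $|z_2|$ with $\varrho^{-1}$ with reference to a figure. It proves that $z_1$ is removable for $d\ge 9$ by Taylor-expanding the numerator at $z_1$, asserting that the zeroth-order coefficient vanishes for $d\ge 9$ and the first-order one at $d=9$. Two further facts are left implicit in the lemma and only surface later through $\mathfrak h$ and $f_\pm$: that $z_2$ is a genuine pole for $d\ge 11$, and that $\pm\varrho^{-1}$ are genuine singularities.

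\textbf{How your route differs.} Your conjugate product factors exactly as you predict, $A(z)^2-z^2D(z)=4p(1-z)(z^2-3z+2/p)\bigl(2-(3p+1)z\bigr)$. Hence $g(z)=2\bigl(2-(3p+1)z\bigr)/\bigl(A(z)-z\sqrt{D(z)}\bigr)$, which exposes the candidate poles $1$ and $z_1$ as artifacts of rationalizing the denominator. This buys you:
\begin{itemize}
\item a single criterion, the sign of $A(z_i)=3(2p-1)z_i-4$, obtained from $z_i^2=3z_i-2/p$, so no squaring is needed;
\item automatic multiplicities at $d=9$: there $A-z\sqrt D$ equals $2A(3/2)=-1\neq 0$ at $z=3/2$, so $N$ carries the whole double root;
\item a proof, rather than an assumption, that $z_2$ is a pole for $d\ge 11$;
\item the exact threshold $d\le 8$, from $|z_j|^2=2/p$ versus $\varrho^{-2}=1/(4pq)$.
\end{itemize}
The sign check you flagged as the main obstacle is short. $A(z_i)>0$ if and only if $z_i>4d/(3(d-2))$. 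Now $z_1\le 4/3$ for $d\ge 10$ and $z_1=3/2<12/7$ for $d=9$, while $z_2>5/3>4d/(3(d-2))$ for $d\ge 11$. Note also that $-\varrho^{-1}$ is a genuine branch point for every $d$, since the denominator does not vanish there. This gives $r\le\varrho^{-1}$ uniformly, including $d=9,10$.

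\textbf{Two slips to fix.}

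First, the sentence ``For $d=9$ the inequality is reversed'' is false. It also contradicts your own correct remark $3/2<9/(2\sqrt 8)$. Indeed, $z_2\le\varrho^{-1}$ is equivalent to $3\sqrt{d-1}+\sqrt{d-9}\le d$. The map $d\mapsto d-3\sqrt{d-1}-\sqrt{d-9}$ is strictly convex on $[9,\infty)$, with minimum $0$ attained only at $d=10$. So $z_2<\varrho^{-1}$ for every $d\ge 9$ other than $10$. The slip is harmless only because the double root $3/2$ is removable at $d=9$, which your argument does establish.

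Second, $A(z_1)\le 0<A(z_2)$ does not hold uniformly in $d\ge 9$. At $d=9$ one has $A(z_2)=A(z_1)=-1/2$. At $d=10$ one has $A(z_2)=0$, so $z_2=\varrho^{-1}$ is a common zero of both conjugates, and your ``exactly one of the two'' claim needs the proviso $D(z_0)\neq 0$. Neither slip affects the conclusion, since $A(z_2)>0$ is only needed for $d\ge 11$.
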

	\begin{proof}
		We start by proving that, for all $d\ge 3$, $z=1$ is not a singularity. For this purpose, it suffices to rewrite the function $g(z)$ given in \eqref{eq:Nz} as follows
		\begin{equation}
			g(z)=\frac{(1-z)\left[3(2p+1)+4p(1-z)-\sqrt{1-4pqz^2}\right]+\left\{1-10p+8pz+\sqrt{1-4pqz^2} \right\}}{2p(1-z)(z_1-z)(z_2-z)}\ ,
		\end{equation}
	observe that the numerator above is holomorphic at $z=1$ and that, for $p\ge\frac12$, the quantity between curly brackets in the previous display equals zero at $z=1$. This proves the claim that $z=1$ is not a singularity for $g(z)$. 
	
	Now we 	observe that  (see also Figure \ref{fig:pot-sing})
\begin{align}\label{eq:u-v2}
		\begin{split}1/\varrho\le |z_2|\end{split} \begin{split}\begin{array}{c}
				\text{if and only if}\ d\in \{3,\ldots,8\} \cup \{10\}\ ,\\
				\text{with equality only for $d=8,10$}\ .
			\end{array}
		\end{split}
	\end{align}
Moreover, for $d\ge 9$,
	\begin{equation}\label{eq:z1z2d>8}
		z_1, z_2 \in \R \quad  \text{and}\quad z_1\le z_2\ 	 \text{with equality if and only if}\ d=9\ ,
	\end{equation} 
while $z_1$ and $z_2$ are complex conjugate for $d\in\{3,\dots,8\}$. Finally, 	\begin{align}\label{eq:z1<1/rho-d>8}
	z_1<1/\varrho\ ,\qquad d\ge 9\ .
\end{align}
  Hence, we are only left to show that $z_1$ is not a singularity for the function $g(z)$ as soon as $d\ge 9$.  
Indeed, by expanding the square root around $z=z_1$ up to the second order (recall \eqref{eq:z1<1/rho-d>8}),  the numerator on the right-hand side of \eqref{eq:Nz} reads as
	\begin{align}
		&\left\{4-3\left(2p+1\right)z_1+4pz_1^2 + z_1\sqrt{1-4pqz_1^2} \right\}  
		\\
		&\qquad + \left\{3\left(2p+1\right)-8pz_1+\frac{4pqz_1}{\sqrt{1-4pqz_1^2}}-\sqrt{1-4pqz_1^2}\right\}\left(z_1-z\right)+O((z_1-z)^2)\ .
	\end{align}
Since the first  curly bracket above equals zero for all $d\ge 9$, while the second one equals zero for $d=9$, 
 $z=z_1$ is not a singularity for $g(z)$ when $d\ge 9$. This concludes the proof of the lemma.
	\end{proof}
By the \emph{first principle of coefficient asymptotics} (see, e.g., \cite[Theorem IV.7]{flajolet_analytic_2009}),  Lemma \ref{lemma:rd} is already enough to obtain the exponential decay rates prescribed in Proposition \ref{prop:CRW-infinite-tree} (cf.\ \eqref{eq:sigma-z2}--\eqref{eq:rho-p-q}). In order to determine the coefficients $\beta$ in \eqref{eq:final-formula-ck}, we use the standard machinery of singularity analysis; see, e.g., \cite[Chapter VI]{flajolet_analytic_2009}. 
 \begin{proof}[Proof of Proposition \ref{prop:CRW-infinite-tree}]In view of the previous lemma, we get the desired claim (i.e.,  \eqref{eq:ck-9}, \eqref{eq:ck-10} and \eqref{eq:ck-11}  below) by proving that the coefficients $\beta=\beta_d$ in \eqref{eq:final-formula-ck} satisfy
	\begin{equation}\label{eq:beta}
		\beta=\begin{cases}
			-\frac32&\text{if}\ d\le 9\\
			-\frac12&\text{if}\ d=10\\
			0&\text{if}\ d\ge 11\ 	.
		\end{cases}
	\end{equation}
	We  split the proof into three cases: $d\le 9$, $d=10$, and  $d\ge 11$.
	
	\smallskip \noindent
	\emph{Case $d \le 9$.} Lemma \ref{lemma:rd} ensures that $r=\varrho^{-1}$. Moreover, $\varrho^{-1}$ and $\varrho$ are two singularities and, by \eqref{eq:u-v2}, $z_1$ and $z_2$ are not singularities for all $d \in \{3,\ldots, 7\}\cup \{9\}$. We argue that $z_1$ and $z_2$ are not singularities also when $d=8$. Indeed, in this case, 
	\begin{align}
			z_1=\frac32-\mathbf{i} \frac{1}{2\sqrt{7}}\ ,\qquad z_2=\frac32+\mathbf{i} \frac{1}{2\sqrt{7}}\ ,
	\end{align} 
and, by expanding the numerator in \eqref{eq:Nz} around $z=z_j$, $j=1,2$, the zeroth-order term equals
\begin{equation}
	4-3\left(2p+1\right)z_j+4pz_j^2 + z_j\sqrt{1-4pqz_j^2} =0\ ,\qquad j=1,2\ .
\end{equation}
In conclusion, for $d\le 9$, $\pm\varrho^{-1}$  are the only two singularities at distance $r$ from the origin. 

Let us now expand the function $g(z)$ around both singularities:
\begin{align}
	g(z)&= a_+ -f_+\sqrt{1-\varrho z} + O\left(1-\varrho z\right)\ ,&& \text{as}\ z\to \varrho^{-1}\ ,\\
	g(z)&= a_- -f_-\sqrt{1+\varrho z} + O\left(1+\varrho z\right)\ ,&&\text{as}\ z\to -\varrho^{-1}\ ,
\end{align}
for some constants $a_+, a_-\in \R$ and values
\begin{align}
	f_+&:=- \frac{\varrho^{-1}}{\sqrt 2p\left(1-\varrho^{-1}\right)\left(z_1-\varrho^{-1}\right)\left(z_2-\varrho^{-1}\right)}>0\ ,
	\\
	f_-&:=- \frac{-\varrho^{-1}}{\sqrt 2p\left(1+\varrho^{-1}\right)\left(z_1+\varrho^{-1}\right)\left(z_2+\varrho^{-1}\right)}>0\ .	
\end{align}
Since $f_+>f_->0$, by \cite[Theorem VI.5]{flajolet_analytic_2009}, we conclude that
\begin{equation}\label{eq:ck-9}
	c_k\sim\frac{f_++(-1)^k f_-}{\sqrt{\pi}}\, k^{-3/2} \varrho^k\ ,\qquad  \text{as}\ k\to \infty\ .
\end{equation}

\smallskip \noindent
\emph{Case $d=10$.} In this case $r=z_2=\varrho^{-1}=\frac53$. There are two singularities at distance $r=\frac53$ from the origin, namely $\pm\frac53$. In particular,
\begin{align}
	g(z)&=-9+\frac{25}6{\sqrt{\frac65}}\left(\frac{5}{3}-z\right)^{-1/2}+O\left(\frac53-z\right)\ ,&& \text{as}\ z\to \frac53\ ,\\
	g(z)&=\frac7{12}-\frac{5\sqrt{2}}{144}\sqrt{1+\frac{3}{5}z}+O\left(\frac53+z\right)\ ,&& \text{as}\ z\to -\frac53\ 	.
\end{align}
Hence, by \cite[Theorem VI.5]{flajolet_analytic_2009}, we conclude that the singularity at $-5/3$ gives a lower order contribution to the the coefficients of $g$, thus,
\begin{equation}\label{eq:ck-10}
	c_k\sim 5\,\sqrt{\frac5{6\pi}}\,  k^{-1/2} \varrho^k\ ,\qquad \text{as}\  k\to \infty\ .
\end{equation}

\smallskip \noindent
\emph{Case $d\ge 11$.} By Lemma \ref{lemma:rd}, we have $r=z_2$. Moreover, $z_2<\varrho^{-1}$, and $z=z_2$ is the only singularity at distance $r$ from the origin. By expanding $g$ around $z=z_2$, one finds explicit constants $\mathfrak h =\mathfrak h_d\in(0.7,2)$ satisfying
\begin{align}
	g(z)= \frac{\mathfrak h}{z_2-z}+O\left(1\right)\ ,\qquad \text{as}\ z\to z_2\ .
\end{align}
By \cite[Theorem VI.4]{flajolet_analytic_2009}, the coefficients in \eqref{eq:laurent} satisfy
\begin{align}\label{eq:ck-11}
	c_k \sim \mathfrak h z_2^{-k}\ ,\qquad \text{as}\ k\to \infty\ .
\end{align}

This  concludes the proof.
\end{proof}

\subsection{From discrete- to continuous-time: proof of Proposition \ref{prop:CRW-infinite-tree-CT}}\label{suse:discr-to-cont}
Recall  that $R_t$ denotes the transition kernel of the continuous-time biased random walk on $\N_0$, with a bias to the right given by $p=\frac{d-1}{d}$ and with $01$ as a slow bond, cf.\ \eqref{eq:rates-slow-bond}. 
Hence, 
\begin{align}\label{eq:R-c}
	R_t(0,0)= \sum_{k=0}^\infty e^{-dt}\, \frac{(dt)^k}{k!}\,c_k\ ,\qquad t \ge 0\ ,
\end{align}
where $(c_k)_{k\in \N_0}$ are the discrete-time transition probabilities as introduced at the beginning of Section \ref{sec:CRW-infinite-tree}. Because of this identity,  Proposition \ref{prop:CRW-infinite-tree}, and \eqref{eq:P-CRW-R}, we now have all the ingredients to prove Proposition \ref{prop:CRW-infinite-tree-CT}.
\begin{proof}[Proof of Proposition \ref{prop:CRW-infinite-tree-CT}]
	By Proposition \ref{prop:CRW-infinite-tree}, which ensures that $c_k\asymp k^\beta\gamma^k$,
	\eqref{eq:R-c} further reads as
	\begin{align}
		R_t(0,0)&\asymp 
		e^{-dt}+\sum_{k=1}^\infty e^{-dt}\,\frac{(d\gamma t)^k}{k!} \, k^\beta = e^{-dt}+ e^{-d\left(1-\gamma\right)t} \sum_{k=1}^\infty e^{-d\gamma t}\,\frac{(d\gamma t)^k}{k!}\, k^\beta\ ,\qquad t\ge 0\ .
	\end{align}
	Since $\beta$-moments, $\beta \le 0$, of a (positive) Poisson random variable with diverging rate $\lambda\to \infty$ are asymptotically equal to $C \lambda^\beta$, for some $C=C_\beta>0$ (see, e.g., \cite{jones_approximating_2004}), we get
	\begin{equation}\label{eq:R-t-infinity}
		R_t(0,0) \asymp e^{-d(1-\gamma)t}  \,	t^\beta\ ,\qquad \text{as}\ t\to \infty\ .
	\end{equation}
	After recalling from \eqref{eq:P-CRW-R} that $R_t(0,0)=\mathbf P_{o,o}^{\scriptscriptstyle{\rm CRW}(\T_d)}(X_t=Y_t)$,  both claims in \eqref{eq:CRW-infinite-tree-lb}--\eqref{eq:CRW-infinite-tree-ub} follow at once by	 inserting $t=T(a)$ as given in \eqref{eq:def-Ta} into \eqref{eq:R-t-infinity}.
\end{proof} 

We conclude with two remarks on the large-degree regime.
\begin{remark}[Sharpening of the cutoff window]\label{rem:window}
	In the supercritical regime ($d>10$),  we expect changes not only in the cutoff location but also in the cutoff window: since $\beta=0$, the window should shrink from order $t_{\rm rel}\log\log n$ to order $t_{\rm rel}$. A rigorous proof of this claim would need an improved version of the upper bound in \eqref{eq:markov-l2}.
\end{remark}
\begin{remark}[Regime $d\to \infty$]\label{rem:large}
	Since the constant $\mathfrak h=\mathfrak h_d $ appearing in \eqref{eq:ck-11} remains bounded away from zero and infinity,	 the asymptotic equivalence  $c_k\asymp \sigma^{k}$ (and, thus, $R_t(0,0)\asymp e^{-d(1-\sigma)t}$ in \eqref{eq:R-t-infinity}) holds uniformly over all $d\ge 11$.   
\end{remark}

\subsection*{Acknowledgments}
The authors would like to thank all organizers of the CIRM conference \textquotedblleft{Interacting particle systems and related fields}\textquotedblright, and CIRM for the kind hospitality.
While this work was written, M.Q.\ and F.S.\ were associated to INdAM (Istituto Nazionale di Alta
Matematica “Francesco Severi”) and the group GNAMPA. The same authors acknowledge partial support from the GNAMPA-INdAM project “Stochastic exchange models: from kinetic theory to opinion dynamics”.
 F.S.\ acknowledges financial support by the UniMI Early Career Development 2025.

\subsection*{Data availability} Data sharing not applicable to this article as no datasets were generated
or analyzed during the current study.

\subsection*{Conflicts of interest} All authors declare that they have no conflicts of interest.

\bibliographystyle{alpha}

	\end{document}